\numberwithin{equation}{section}
\theoremstyle{definition}
\newtheorem*{remark*} {Remark}
\theoremstyle{plain}
\newtheorem*{theorem*}{Theorem}
\newtheorem{theorem}{Theorem}[section]
\newtheorem{lemma}[theorem]{Lemma}
\newtheorem{proposition}[theorem]{Proposition}
\newtheorem{corollary}[theorem]{Corollary}
\newtheorem{definition}[theorem]{Definition}
\newtheorem{remark}[theorem]{Remark}
\newcommand{\bz}{\mathbb{Z}}
\newcommand{\br}{\mathbb{R}}
\newcommand{\bc}{\mathbb{C}}
\newcommand{\bn}{\mathbb{N}}
\newcommand{\bq}{\mathbb{Q}}
\newcommand{\vx}{\mathbf{x}}
\renewcommand{\geq}{\geqslant}
\renewcommand{\leq}{\leqslant}
\newcommand{\norm}[1]{\left\lVert#1\right\rVert}
\newcommand\modu{\operatorname{mod}}
\newcommand{\floor}[1]{\lfloor #1 \rfloor}
\newenvironment{enumerate-math-abc}
{\begin{enumerate}
\addtolength{\itemsep}{5pt}
}
{\end{enumerate}}
\title[Sets of bounded remainder]{Sets of bounded remainder for the continuous irrational rotation on $[0,1]^2$}
\author{Sigrid Grepstad and Gerhard Larcher}
\address{Institute of Financial Mathematics and Applied Number Theory, Johannes Kepler University Linz, Altenbergerstr.\ 69, A-4040 Linz, Austria.}
\email{sigrid.grepstad@jku.at}
\address{Institute of Financial Mathematics and Applied Number Theory, Johannes Kepler University Linz, Altenbergerstr.\ 69, A-4040 Linz, Austria.}
\email{gerhard.larcher@jku.at}
\date{February 29, 2016}
\subjclass[2010]{11K38, 11J71}
\keywords{Bounded remainder set, discrepancy, continuous irrational rotation}
\thanks{The authors are supported by the Austrian Science Fund (FWF): Projects F5505-N26 and F5507-N26, which are both part of the Special Research Program ``Quasi-Monte Carlo Methods: Theory and Applications''.}
\begin{document}

\begin{abstract}
 We study sets of bounded remainder for the two-dimensional continuous irrational rotation $(\{x_1+t\}, \{x_2+t\alpha \})_{t \geq 0}$ in the unit square. In particular, we show that for almost all $\alpha$ and every starting point $(x_1, x_2)$, every polygon $S$ with no edge of slope $\alpha$ is a set of bounded remainder. Moreover, every convex set $S$ whose boundary is twice continuously differentiable with positive curvature at every point is a bounded remainder set for almost all $\alpha$ and every starting point $(x_1, x_2)$. Finally we show that these assertions are, in some sense, best possible. 
\end{abstract}

\maketitle

%%%%%%%%%%%%%%%%%%%%%%%%%%%%%%%%%%%%%%%%%%%% Introduction %%%%%%%%%%%%%%%%%%%%%%%%%%%%%%%%%%%%%%%%%%%%%%%%%

\section{Introduction}
In this paper we will be concerned with bounded remainder sets for the two-dimensional irrational rotation on the unit square $I^2 = [0,1)^2$. 
\begin{definition}
 Let $\vx = (x_1, x_2) \in I^2$, and let $\alpha \in \br \setminus \bq$. We say that the function $X : [0, \infty) \mapsto I^2$ defined by
 \begin{equation*}
  X(t) = \left( \{x_1+t\}, \{x_2+\alpha t \}\right)
 \end{equation*}
is the two-dimensional continuous irrational rotation with slope $\alpha$ and starting point $\vx$.
\end{definition}

\begin{definition}
\label{def:cbrs}
Let $S \subset I^2$ be an arbitrary measurable subset of the unit square with Lebesgue measure $\lambda (S)$. We say that $S$ is a \emph{bounded remainder set} for the continuous irrational rotation with slope $\alpha>0$ and starting point $\vx=(x_1, x_2) \in I^2$ if the distributional error
\begin{equation}
 \label{eq:discrepancy}
 \Delta_T(S, \alpha, \vx) = \int_0^T \chi_S \left( \{x_1+t\}, \{x_2+\alpha t \} \right) \, dt - T \lambda (S)
\end{equation}
is uniformly bounded for all $T > 0$. Here, $\chi_S$ denotes the characteristic function for the set $S$.
\end{definition}
 
Bounded remainder sets have been extensively studied for the discrete analogue of continuous irrational rotation, that is, for Kronecker sequences $(\{n\alpha_1\}, \{n\alpha_2\}, \ldots , \{n\alpha_s\})_{n=1, 2, \ldots}$ in $[0,1)^s$, where $\alpha_1, \ldots , \alpha_s$ are given reals. In this context, a bounded remainder set $S \subseteq [0,1)^s$ is a measurable set for which the difference
\begin{equation*}
\left| \sum_{n=1}^N \chi_S (\{x_1+ n\alpha_1\}, \ldots , \{x_s+n\alpha_s\}) - N \lambda (S) \right|
\end{equation*}
is uniformly bounded for all integers $N\geq 1$ and almost every point $(x_1, \ldots , x_s) \in [0,1)^s$. In the simplest case when $s=1$ and $S$ is just an interval, bounded remainder sets for the Kronecker sequences were explicitly characterized by Hecke \cite{hecke}, Ostrowski \cite{ostrowski1, ostrowski2} and Kesten \cite{kesten}. In the general multi-dimensional case, a characterization of bounded remainder sets in terms of equidecomposability to certain parallelepipeds was recently given in \cite{grlev}.

Without going into further detail on the known results for the Kronecker sequences, let us simply emphasize that in the discrete case, a given set $S \subset [0,1)^s$ is a bounded remainder set for only ``very few'' choices of $(\alpha_1, \ldots , \alpha_s)$. Likewise, given a vector $(\alpha_1, \ldots, \alpha_s)$, the class of sets $S$ which are of bounded remainder with respect to this vector is, in some sense, small.
Once we consider bounded remainder sets for the \emph{continuous} irrational rotation, the situation turns out to be quite different. In light of recent work by J\'{o}szef Beck, this is not entirely unexpected. Beck studied distributional properties of the continuous irrational rotation in \cite{beck1, beck2, beck3}, and showed in particular that:
\begin{theorem*}[Beck {\cite[Theorem 1]{beck3}}]
Let $S \subseteq I^2$ be an arbitrary Lebesgue measurable set in the unit square with positive measure. Then for every $\varepsilon >0$, almost all $\alpha>0$ and every starting point $\mathbf{x}=(x_1, x_2)\in I^2$, we have
\begin{equation*}
 \int_0^T \chi_S \left( \{x_1+t\}, \{x_2+\alpha t\} \right) \, dt - T \lambda (S) = o \left( (\log T)^{3+\varepsilon} \right) .
\end{equation*}
\end{theorem*}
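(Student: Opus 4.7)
The natural approach is via Fourier analysis on the torus. Expanding the characteristic function as
\[
\chi_S(y_1, y_2) = \sum_{\mathbf{k} \in \bz^2} \widehat{\chi}_S(\mathbf{k}) \, e^{2\pi i (k_1 y_1 + k_2 y_2)},
\]
substituting $(y_1,y_2)=(\{x_1+t\},\{x_2+\alpha t\})$, and integrating termwise over $[0,T]$, the zero Fourier mode contributes exactly $T\lambda(S)$, while each $\mathbf{k}\neq\mathbf{0}$ yields an elementary oscillatory integral, so that
\[
\Delta_T(S,\alpha,\vx) = \sum_{\mathbf{k} \neq \mathbf{0}} \widehat{\chi}_S(\mathbf{k}) \, e^{2\pi i \mathbf{k}\cdot\vx} \cdot \frac{e^{2\pi i (k_1+k_2\alpha)T}-1}{2\pi i (k_1+k_2\alpha)}.
\]
Each summand is controlled by $\min\bigl(T,\,1/(\pi|k_1+k_2\alpha|)\bigr)\cdot|\widehat{\chi}_S(\mathbf{k})|$.

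Two ingredients feed into the argument. First, Parseval's identity $\sum_{\mathbf{k}} |\widehat{\chi}_S(\mathbf{k})|^2 = \lambda(S)$ is the \emph{only} a priori control on the Fourier side for an arbitrary measurable $S$. Second, a metric Diophantine estimate: Khintchine's theorem guarantees that for almost every $\alpha$ and every $\delta>0$,
\[
|k_1+k_2\alpha| \geq \frac{c(\alpha)}{\max(1,|k_2|)\,(\log(2+|k_2|))^{1+\delta}}
\]
for all $\mathbf{k}\neq\mathbf{0}$. With these in hand the plan is to truncate the Fourier series at a level $K=K(T)$ chosen as a small power of $\log T$, bound the low-frequency part by a dyadic decomposition of $\mathbf{k}$-space according to the size of $|k_1+k_2\alpha|$ combined with Parseval on the Fourier factor, and bound the high-frequency tail via approximation of $\chi_S$ by its truncated Fourier expansion (equivalently, by convolution with a smooth mollifier of width $\sim 1/K$).

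The genuine obstacle is the high-frequency tail. A direct Cauchy--Schwarz over $|\mathbf{k}|\leq K$ already produces a polynomial-in-$K$ bound from the sum of reciprocal denominators squared, so one cannot afford to lose any factor of $K$. Instead one must group frequencies dyadically by the scale of $|k_1+k_2\alpha|$, use the continuous parameter $T$ to extract cancellation from the off-diagonal $\mathbf{k}\neq\mathbf{k}'$ terms appearing in $|\Delta_T|^2$, and crucially exploit the qualitative statement $\sum_{|\mathbf{k}|>K} |\widehat{\chi}_S(\mathbf{k})|^2\to 0$ as $K\to\infty$ for \emph{every} measurable $S$. This last point is precisely what promotes the conclusion from $O((\log T)^{3+\varepsilon})$ to $o((\log T)^{3+\varepsilon})$. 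The exponent $3+\varepsilon$ itself emerges from the combined logarithmic losses of the dyadic count over $k_2$, the summation over $k_1$ at fixed $k_2$, and the Khintchine exponent $1+\delta$. Most of the technical effort lies in setting up the off-diagonal cancellation carefully enough for the naive polynomial-in-$K$ bound to collapse to a polylogarithmic one.
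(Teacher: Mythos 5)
First, note that the paper does not prove this statement at all: it is Beck's theorem, quoted verbatim from \cite{beck3} as motivation for the present work, so there is no internal proof to compare against and your attempt must stand on its own as a proof of Beck's result. As written, it does not. The Fourier framework you set up is the natural starting point, but the two steps you would actually need are either unjustified or explicitly deferred. The termwise integration producing the closed-form expression for $\Delta_T(S,\alpha,\vx)$ is not legitimate for an arbitrary measurable $S$: the Fourier series of $\chi_S$ converges only in $L^2(I^2)$, and the orbit $\left\{ (\{x_1+t\},\{x_2+\alpha t\}) : 0 \leq t \leq T \right\}$ is a null set, so $L^2$-convergence gives no control of $\chi_S$ along the orbit; moreover $\sum_{\mathbf{k}}|\widehat{\chi}_S(\mathbf{k})|$ generally diverges, so exchanging sum and integral requires a genuine argument (mollifying $\chi_S$ changes the set, and the resulting error must again be controlled on a null set). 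Relatedly, the conclusion is asserted for \emph{every} starting point $\vx$, which no purely $L^2$/Parseval argument can deliver; this uniformity in $\vx$ is one of the genuinely hard features of Beck's theorem and your sketch does not address it.

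Second, and more importantly, the core of the proof is missing. You correctly observe that Cauchy--Schwarz combined with Parseval and the Khintchine-type lower bound on $|k_1+k_2\alpha|$ yields only a polynomial-in-$K$ estimate, and you then state that one must ``extract cancellation from the off-diagonal terms'' so that this collapses to a polylogarithmic bound --- but you do not carry this out, and it is not a routine computation. For a fixed $T$ there is no average over $T$ available to kill the off-diagonal terms in $|\Delta_T|^2$; the standard escape is to average over $\alpha$ (a metric second-moment argument), apply Borel--Cantelli along a sparse sequence of times, and then interpolate in $T$, and none of that machinery is set up here. This is precisely where the difficulty of Beck's theorem lives, so deferring it means the proposal is an outline of a strategy rather than a proof. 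In consequence, neither the claimed source of the exponent $3+\varepsilon$ nor the upgrade from $O$ to $o$ via $\sum_{|\mathbf{k}|>K}|\widehat{\chi}_S(\mathbf{k})|^2 \rightarrow 0$ can be verified, since both rest on the missing step.
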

As pointed out by Beck, the polylogarithmic error term is shockingly small compared to the linear term $T \lambda (S)$. Moreover, it holds for \emph{all} measurable sets $S$. It is thus natural to ask if imposing certain regularity conditions on $S$ could give an even lower bound on the error term.

The aim of this paper is to show that the estimate of Beck can be significantly improved for a large collection of sets $S$. We show that:
\begin{theorem}
 \label{thm:mainpoly}
 For almost all $\alpha>0$ and every $\mathbf{x} \in I^2$, every polygon $S \subset I^2$ with no edge of slope $\alpha$ is a bounded remainder set for the continuous irrational rotation with slope $\alpha$ and starting point $\mathbf{x}$.
\end{theorem}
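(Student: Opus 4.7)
The plan is to analyse the discrepancy by Fourier series on $\bt^2$. Expanding $\chi_S$ in a Fourier series and interchanging sum and integral in \eqref{eq:discrepancy} yields
\begin{equation*}
\Delta_T(S,\alpha,\vx) = \sum_{(k,l)\neq (0,0)} \hat{\chi_S}(k,l)\, e^{2\pi i (kx_1 + lx_2)}\cdot\frac{e^{2\pi i (k+l\alpha)T}-1}{2\pi i (k+l\alpha)}.
\end{equation*}
Each summand has modulus at most $|\hat{\chi_S}(k,l)|/(\pi|k+l\alpha|)$, independently of $T$ and $\vx$, so uniform boundedness of $\Delta_T$ reduces to the absolute convergence
$\sum_{(k,l)\neq 0} |\hat{\chi_S}(k,l)|/|k+l\alpha| < \infty$
for almost every $\alpha$.

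To estimate $\hat{\chi_S}(\xi)$ for the polygon $S$, write $\partial S$ as a union of edges $e_j$ with tangent directions $u_j$, outward normals $n_j$, and lengths $L_j$. The divergence theorem expresses $\hat{\chi_S}(\xi)$ as a weighted sum of boundary integrals $\int_{e_j} e^{-2\pi i \xi \cdot y}\, d\ell(y)$, each of which is bounded by $\min(L_j, (\pi|\xi \cdot u_j|)^{-1})$. Evaluating these 1D oscillatory integrals and regrouping their endpoint contributions by vertex, one obtains, for $\xi$ with $\xi \cdot u_j \neq 0$ for all $j$, the sharper bound $|\hat{\chi_S}(\xi)| \leq C\sum_v |(\xi\cdot u_j)(\xi\cdot u_{j+1})|^{-1}$, with the sum taken over the vertices $v = e_j\cap e_{j+1}$, exhibiting the true $|\xi|^{-2}$ decay in generic directions. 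Simultaneously, Khintchine's metric theorem gives, for almost every $\alpha$ and each $\varepsilon > 0$, the Diophantine lower bound $|k+l\alpha| \geq c(\alpha,\varepsilon)\max(|k|,|l|)^{-1-\varepsilon}$ valid for every nonzero $(k,l) \in \bz^2$.

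The hypothesis that no edge of $S$ has slope $\alpha$ translates into $u_j \not\parallel (1,\alpha)$ for every $j$, so that the ``singular lines'' $\{\xi\cdot u_j = 0\}$ of the Fourier estimate and $\{\xi\cdot(1,\alpha)=0\}$ of the Diophantine estimate are pairwise transverse in $\br^2$ and meet only at the origin. I would then split $\bz^2\setminus\{0\}$ into sectors according to which of these lines $\xi$ is nearest to and verify convergence in each case: in a sector near some $\{\xi\cdot u_j = 0\}$ one has $|\xi\cdot(1,\alpha)|\gtrsim|\xi|$, so even the weaker bound $|\hat{\chi_S}(\xi)|\leq C L_j/|\xi|$ (the only one available when $u_j$ has rational slope, so that $\xi\cdot u_j = 0$ on an infinite sublattice of $\bz^2$) contributes a one-dimensional $\sum 1/n^2$; in a sector near $\{\xi\cdot(1,\alpha)=0\}$ the full vertex bound gives $|\xi|^{-2}$ decay which, together with the Diophantine estimate, yields a convergent series concentrated on the sparse set of best approximations to $-\alpha$; and in the bulk all three denominators are of order $|\xi|$, so the summand is $O(|\xi|^{-3+\varepsilon})$. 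The chief obstacle, and what the hypothesis is designed to defeat, is the simultaneous occurrence of both kinds of small denominators at the same $\xi$: transversality rules this out, while the ``almost every $\alpha$'' clause is needed precisely so that $|k+l\alpha|^{-1}$ remains polynomial in $\max(|k|,|l|)$.
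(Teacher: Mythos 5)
Your route through the two-dimensional Fourier expansion of $\chi_S$ is genuinely different from the paper's, which projects along the flow direction to reduce everything to the one-dimensional question of whether the periodized hat function $\tau_S(x)=\int_0^1\chi_S(t,\{t\alpha+x\})\,dt$ is a bounded remainder function, and then works with Ostrowski expansions and Koksma's inequality. Your identification of why the hypothesis matters (transversality of the singular line $\{\xi\cdot(1,\alpha)=0\}$ with the lines $\{\xi\cdot u_j=0\}$) is correct, and a Fourier argument of this shape can be pushed through. As written, however, there are three genuine gaps.

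First, the opening identity for $\Delta_T$ is obtained by termwise integration of a Fourier series that is not absolutely convergent (generically $|\hat{\chi_S}(\xi)|\asymp|\xi|^{-2}$, so $\sum_\xi|\hat{\chi_S}(\xi)|=\infty$), and absolute convergence of the final series does not by itself show that the series equals $\Delta_T$; you need an approximation argument (smooth majorants and minorants, or first integrating over $t\in[n,n+1]$ — which brings you straight back to the paper's $\tau_S$). Second, and more seriously, Khintchine's bound $|k+l\alpha|\geq c\max(|k|,|l|)^{-1-\varepsilon}$ is too weak in the sector around $\{\xi\cdot(1,\alpha)=0\}$: applied termwise it gives summands of size $|\xi|^{-1+\varepsilon}$ over roughly $\varepsilon|l|$ lattice points per row, and the resulting sum diverges. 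One must sum over $k$ first, reducing the sector to $\sum_l l^{-2}\bigl(\log l+\norm{l\alpha}^{-1}\bigr)$, and the almost-everywhere convergence of $\sum_l \frac{1}{l^2\norm{l\alpha}}$ does \emph{not} follow from Khintchine's theorem (the function $1/\norm{l\alpha}$ has infinite expectation); it requires Borel--Bernstein-type control of the partial quotients, which is exactly the content of the paper's Lemma \ref{lem:cfsum}. Third, in a sector around $\{\xi\cdot u_j=0\}$ the bound $|\hat{\chi_S}(\xi)|\leq CL_j/|\xi|$ combined with $|k+l\alpha|\gtrsim|\xi|$ yields only $\sum|\xi|^{-2}$ over a two-dimensional sector, which diverges logarithmically; the ``one-dimensional $\sum 1/n^2$'' is correct only on the exact zero set $\{\xi\cdot u_j=0\}$ (nonempty only when the edge slope is rational), and for the nearby lattice points you must use the decay $\min\bigl(L_j,|\xi\cdot u_j|^{-1}\bigr)$ transverse to that line. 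All three gaps are fixable, but the second shows that the metric input you need is the same continued-fraction lemma the paper relies on, not Khintchine's theorem.
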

\begin{theorem}
 \label{thm:mainconvex}
 For almost all $\alpha>0$ and every $\mathbf{x} \in I^2$, every convex set $S \subset I^2$ whose boundary $\partial S$ is a twice continuously differentiable curve with positive curvature at every point is a bounded remainder set for the continuous irrational rotation with slope $\alpha$ and starting point $\mathbf{x}$. 
\end{theorem}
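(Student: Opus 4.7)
The plan is to attack Theorem~\ref{thm:mainconvex} by direct Fourier analysis rather than by approximating $S$ by polygons and invoking Theorem~\ref{thm:mainpoly}: the latter route is blocked by the fact that smooth approximations of $S$ by polygons necessarily contain edges whose slopes approach $\alpha$, and the constant in Theorem~\ref{thm:mainpoly} degrades as an edge slope approaches $\alpha$. The key geometric input is instead Herz's classical estimate: since $\partial S$ is $C^2$ with strictly positive curvature,
\begin{equation*}
  \bigl| \widehat{\chi_S}(k) \bigr| \leq \frac{C_S}{(k_1^2+k_2^2)^{3/4}}, \qquad k \in \bz^2 \setminus \{0\}.
\end{equation*}

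Expanding $\chi_S$ in Fourier series on $I^2$ and substituting into \eqref{eq:discrepancy}, a mollification/limiting argument gives
\begin{equation*}
 \Delta_T(S, \alpha, \vx) = \sum_{k \in \bz^2 \setminus \{0\}} \widehat{\chi_S}(k) \, e^{2\pi i \ip{k}{\vx}} \cdot \frac{e^{2\pi i (k_1+\alpha k_2) T} - 1}{2\pi i (k_1+\alpha k_2)}.
\end{equation*}
Bounding the right-hand side uniformly in $T$ is the heart of the proof. A naive triangle-inequality bound $|\Delta_T| \leq \pi^{-1} \sum_{k \neq 0} |\widehat{\chi_S}(k)|/|k_1+\alpha k_2|$ is insufficient: even using the sharpest pointwise Diophantine bound $\|k_2\alpha\| \geq c_\alpha/(|k_2|(\log |k_2|)^{1+\eta})$ available for almost every $\alpha$, the contribution to this sum from the single nearest-integer lattice point $k_1 = -\operatorname{round}(\alpha k_2)$ in each ``column'' $k_2 \neq 0$ gives a divergent tail $\sum (\log|k_2|)^{1+\eta}/|k_2|^{1/2}$. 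Cancellation must therefore be exploited: Herz's estimate comes in a stationary-phase form carrying oscillatory factors $e^{\mp 2\pi i \ip{k}{x_\pm(k/|k|)}}/\sqrt{K(x_\pm(k/|k|))}$, where $x_\pm(k/|k|) \in \partial S$ are the two boundary points of outward normal parallel to $\pm k$ and $K$ is the curvature, and combining these with $e^{2\pi i \ip{k}{\vx}}$ and $e^{2\pi i (k_1+\alpha k_2)T}$ yields a sum amenable to a van~der~Corput-type argument. The remaining contribution from genuinely resonant $k$, those with $|k_1+\alpha k_2| \ll 1$, is then controlled for a.e.~$\alpha$ by the Khintchine-Groshev theorem.

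The main obstacle I foresee is exactly this cancellation step: showing that the resulting oscillatory sum is $O(1)$ uniformly in $T$ requires a careful analysis of the interplay between the nonlinear boundary parametrization $k \mapsto x_\pm(k/|k|)$ (which encodes the geometry of $S$) and the linear phase $(k_1+\alpha k_2)T$, and the correct grouping of $k$'s and choice of van~der~Corput differencing is delicate. A secondary technicality is justifying the Fourier identity for \emph{every} $\vx \in I^2$ (not merely almost every); this I would handle by the mollification $\chi_S \mapsto \chi_S \ast \phi_\varepsilon$, whose Fourier-side bound is uniform in $\varepsilon$ because $|\widehat{\chi_S \ast \phi_\varepsilon}(k)| \leq |\widehat{\chi_S}(k)|$, and then passing to the limit $\varepsilon \to 0$ by dominated convergence of $\int_0^T \chi_S^\varepsilon(\vx + t(1,\alpha))\,dt$ to $\int_0^T \chi_S(\vx + t(1,\alpha))\,dt$ for each fixed $T$ and $\vx$ (which applies because the orbit meets $\partial S$ in a time-set of Lebesgue measure zero).
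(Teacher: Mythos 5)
Your route is genuinely different from the paper's. The paper never looks at the two-dimensional Fourier transform of $\chi_S$: it projects along the flow direction, associating to $S$ the function $\tau_S(x)=\int_0^1\chi_S(t,\{t\alpha+x\})\,dt$, proves (Lemma \ref{lem:equivs}) that $S$ is a bounded remainder set iff $\tau_S$ is a bounded remainder function for the discrete rotation by $\alpha$, uses positive curvature only to show that $\tau_S$ is the periodization of a concave ``dome function'' with $O(z^{1/2})$ behaviour at the endpoints of its support, and then establishes the bounded remainder property of such functions (Proposition \ref{prop:domefncbrf}) via Ostrowski expansions, Koksma--Hlawka, and the metric continued-fraction Lemma \ref{lem:cfsum}. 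Your diagnosis of the Fourier-side difficulty is accurate: Herz's bound $|\widehat{\chi_S}(k)|=O(|k|^{-3/2})$ together with the best almost-everywhere pointwise lower bound on $|k_1+\alpha k_2|$ does leave a divergent resonant tail, so absolute convergence is genuinely unavailable and cancellation must be extracted. Your mollification argument for handling every (not just almost every) $\mathbf{x}$ is also sound in principle.

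The problem is that the proposal stops exactly where the theorem's difficulty begins. The claim that the resonant part of
\begin{equation*}
\sum_{k\neq 0}\widehat{\chi_S}(k)\,e^{2\pi i\ip{k}{\vx}}\,\frac{e^{2\pi i(k_1+\alpha k_2)T}-1}{2\pi i(k_1+\alpha k_2)}
\end{equation*}
is $O(1)$ uniformly in $T$ is asserted to follow from ``a van der Corput-type argument'' exploiting the stationary-phase factors $e^{\mp 2\pi i\ip{k}{x_\pm(k/|k|)}}$, but no such argument is given: you do not specify the grouping of the resonant frequencies $k_1=-\mathrm{round}(\alpha k_2)$, the differencing scheme, how the nonlinear phase $\ip{k}{x_\pm}$ interacts with the linear phase $(k_1+\alpha k_2)T$, or which Diophantine condition on $\alpha$ (beyond Khintchine--Groshev, which by itself controls only the sizes $|k_1+\alpha k_2|$ and not the needed cancellation) ultimately carries the full-measure statement. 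You yourself identify this as the main obstacle, which is the correct assessment: as it stands the proposal is a program rather than a proof, and all of the content of Theorem \ref{thm:mainconvex} resides in the missing estimate. It is plausible that the program can be completed --- in effect the paper's Ostrowski decomposition performs the required resummation of the resonant column in physical space, with the curvature entering through the $z^{1/2}$ endpoint decay rather than through the $|k|^{-3/2}$ Fourier decay --- but that work has to be carried out before this can be accepted.
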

We will see from the proofs that Theorems \ref{thm:mainpoly} and \ref{thm:mainconvex} hold for all $\alpha$
whose continued fraction expansion $\alpha=[a_0; a_1, a_2,\cdots]$ satisfies
\begin{equation*}
 \sum_{l=0}^s \frac{a_{l+1}}{q_{l}^{1/2}} \sum_{k=1}^{l+1}a_k < C ,
\end{equation*}
where $C$ is a constant independent of $s$.
Here, $(q_l)_{l \geq 0}$ is the sequence of best approximation denominators for $\alpha$.

The following results are immediate consequences of Theorems \ref{thm:mainpoly} and \ref{thm:mainconvex}.
 \begin{corollary}
 \label{cor:mainpoly}
  Let $S$ be a polygon in $I^2$. Then $S$ is a bounded remainder set with respect to continuous irrational rotation for almost every $\alpha>0$ and every starting point $\mathbf{x} \in I^2$.
 \end{corollary}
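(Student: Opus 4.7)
The plan is to deduce the corollary from Theorem~\ref{thm:mainpoly} by a simple quantifier-switching argument, exploiting the fact that a polygon has only finitely many edges.

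First, by Theorem~\ref{thm:mainpoly}, there exists a set $A \subset (0, \infty)$ of full Lebesgue measure such that for every $\alpha \in A$, every starting point $\mathbf{x} \in I^2$, and every polygon $S \subset I^2$ having no edge of slope $\alpha$, the set $S$ is a bounded remainder set with respect to the continuous irrational rotation with slope $\alpha$ and starting point $\mathbf{x}$.

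Now fix an arbitrary polygon $S \subset I^2$. Let $E(S) \subset \br \cup \{\infty\}$ denote the (finite) set of slopes of the edges of $S$. Since $E(S)$ is finite, it has Lebesgue measure zero, and therefore the set $A \setminus E(S)$ still has full measure in $(0, \infty)$. For every $\alpha \in A \setminus E(S)$, the polygon $S$ has no edge of slope $\alpha$, so by the choice of $A$, the set $S$ is a bounded remainder set for every starting point $\mathbf{x} \in I^2$. This proves the corollary.

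There is no real obstacle here; the only observation needed beyond Theorem~\ref{thm:mainpoly} is that the role of $S$ and $\alpha$ can be swapped precisely because a polygon has only finitely many edge slopes, so the exceptional set of $\alpha$'s that ``collide'' with an edge direction of $S$ is negligible.
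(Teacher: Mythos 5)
Your argument is correct and is exactly the (implicit) reasoning behind the paper's statement that the corollary is an ``immediate consequence'' of Theorem~\ref{thm:mainpoly}: since a polygon has only finitely many edge slopes, removing them from the full-measure set of admissible $\alpha$ still leaves a set of full measure. Nothing further is needed.
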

 \begin{corollary}
  \label{cor:mainconvex}
  Let $S$ be a convex set in $I^2$ whose boundary $\partial S$ is a twice continuously differentiable curve with positive curvature at every point. Then $S$ is a bounded remainder set with respect to continuous irrational rotation for almost every $\alpha>0$ and every starting point $\mathbf{x} \in I^2$.
 \end{corollary}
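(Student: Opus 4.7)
The plan is to observe that Corollary \ref{cor:mainconvex} follows from Theorem \ref{thm:mainconvex} by a trivial re-ordering of quantifiers, and to identify the one subtle point that makes this re-ordering legal, namely the uniformity of the exceptional set. I would begin by reading Theorem \ref{thm:mainconvex} carefully: it asserts that there is a single Lebesgue null set $N \subset (0,\infty)$ of slopes such that for \emph{every} $\alpha \in (0,\infty) \setminus N$, \emph{every} starting point $\mathbf{x} \in I^2$, and \emph{every} convex $S \subset I^2$ with $C^2$ boundary of strictly positive curvature, the set $S$ is of bounded remainder for the rotation $(\{x_1+t\},\{x_2+\alpha t\})$. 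The key point is that $N$ is independent of the pair $(S,\mathbf{x})$; indeed, the explicit Diophantine condition on the continued fraction of $\alpha$ displayed in the excerpt right after Theorem \ref{thm:mainconvex} defines $N$ purely in terms of $\alpha$.

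With this in hand, the proof of the corollary is literally a one-liner. I would fix an arbitrary convex set $S \subset I^2$ satisfying the curvature hypothesis and declare the null set of excluded slopes for $S$ to be the same set $N$ provided by Theorem \ref{thm:mainconvex}. For any $\alpha \in (0,\infty) \setminus N$ and any $\mathbf{x} \in I^2$, the theorem then directly yields that $S$ is a bounded remainder set for the rotation with slope $\alpha$ and starting point $\mathbf{x}$. This is precisely the statement of Corollary \ref{cor:mainconvex}, with an exceptional set of measure zero in $\alpha$ that can in fact be chosen uniformly over all admissible $S$ and all $\mathbf{x}$.

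There is no genuine obstacle in this passage; the entire mathematical content resides in Theorem \ref{thm:mainconvex} itself. I would record the corollary only to emphasize the more classical ``fix $S$ first, then exclude a null set of $\alpha$'' formulation, which mirrors the way bounded remainder results are customarily phrased for Kronecker sequences in the discrete setting discussed in the introduction. No new estimates, approximations, or Diophantine arguments are needed at this stage.
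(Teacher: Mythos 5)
Your proposal is correct and matches the paper's treatment: the paper derives Corollary \ref{cor:mainconvex} as an immediate consequence of Theorem \ref{thm:mainconvex}, relying precisely on the fact that the exceptional null set of slopes (determined by the continued fraction condition) is independent of $S$ and $\mathbf{x}$, so the quantifiers may be re-ordered. No further argument is given or needed.
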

In light of Corollaries \ref{cor:mainpoly} and \ref{cor:mainconvex}, it is tempting to raise the question of whether \emph{every} convex set $S \subset I^2$ is a bounded remainder set with respect to continuous irrational rotation for almost every slope $\alpha>0$ and every starting point $\mathbf{x} \in I^2$. We leave this question open. 

 Theorems \ref{thm:mainpoly} and \ref{thm:mainconvex} above are, in a certain sense, optimal. First of all, the slope condition in Theorem \ref{thm:mainpoly} on the edges of the polygon $S$ cannot be omitted. To see this, fix some $\alpha>0$, and let $S$ be the parallelogram shown in Figure \ref{fig:fullpgram} with $p \notin \bz \alpha (\modu 1)$ and $\lambda (S) = p$. 
 \begin{figure}[htb]
 \centering
 \begin{tikzpicture}[scale=5]
%axes
\draw (-.15,0) -- (1.15, 0);
\draw (0,-.1) -- (0, 1.1);
%limits of unit square
\draw[dashed] (0,1) -- (1,1) -- (1,0);
\draw(0,1) node[left]{$1$};
\draw(1,0) node[below]{$1$};
%the set S
\draw[draw=black,fill={black!20!white}]
	(0,0) -- (1, {1/(2*sqrt(2))}) -- (1, {1/(2*sqrt(2))+.4}) -- (0,.4) -- (0,0);
\draw(.5, {.2+1/(4*sqrt(2))}) node{$S$};
%arrows
\draw[latex-latex] (-.1,0) -- (-.1,.4);
\draw(-.1, .2) node[left]{$p$};
\draw[latex-latex] (1.1,0) -- (1.1, {1/(2*sqrt(2))});
\draw(1.1,{1/(4*sqrt(2))}) node[right]{$\alpha$};
\end{tikzpicture}
 \caption{The parallelogram $S$ with two edges of slope $\alpha$. \label{fig:fullpgram}} 
 \end{figure}
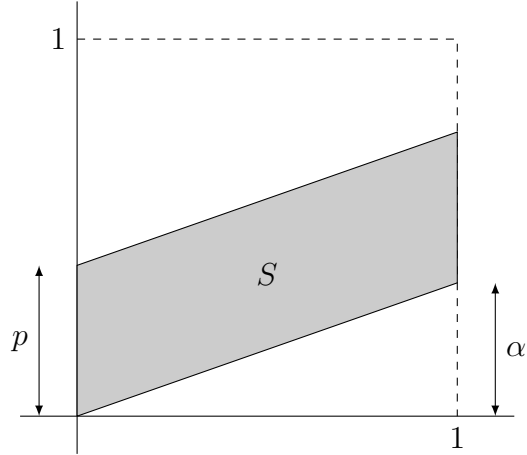
 It is not difficult to show that for such a set $S$, with two edges of slope $\alpha$, we have
 \begin{equation*}
  \left| \int_0^T \chi_S \left( \{t\}, \{\alpha t\} \right) \, dt - \sum_{n=1}^{\floor{T}} \chi_{[0,p)}(\{n\alpha\}) \right| \leq 1.
 \end{equation*}
 We recall from the discrete setting that if $p \notin \bz \alpha (\modu 1)$, then the difference
 \begin{equation*}
  \left| \sum_{n=1}^{\floor{T}} \chi_{[0,p)}(\{n\alpha\}) - p \floor{T} \right|
 \end{equation*}
 is unbounded as $T \rightarrow \infty$ \cite{kesten}, and accordingly so is 
 \begin{equation*}
  \left| \Delta_T(S, \alpha, 0) \right| = \left| \int_0^T \chi_S \left( \{t\}, \{ \alpha t\}\right) - pT\right| .
 \end{equation*}
 Thus, the set $S$ in Figure \ref{fig:fullpgram} is not of bounded remainder for the continuous irrational rotation with slope $\alpha$ starting at the origin. By an equivalent argument, all sets $S'$ similar to the examples shown in Figure \ref{fig:modpgram} with $p \notin \bz \alpha (\modu 1)$ are \emph{not} bounded remainder sets.
 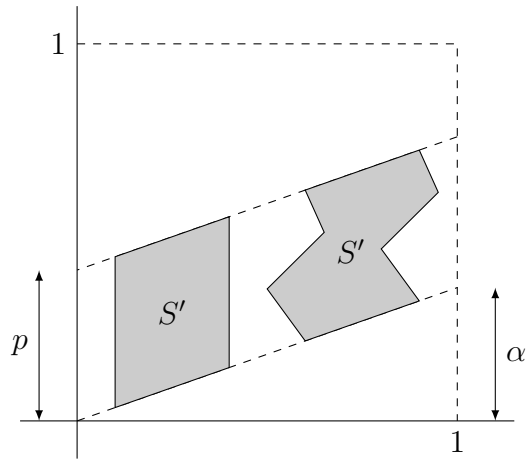
\begin{figure}[htb]
  \centering
 \begin{tikzpicture}[scale=5]
%axes
\draw (-.15,0) -- (1.15, 0);
\draw (0,-.1) -- (0, 1.1);
%limits of unit square
\draw[dashed] (0,1) -- (1,1) -- (1,0);
\draw(0,1) node[left]{$1$};
\draw(1,0) node[below]{$1$};
%dashed band borders
\draw[dashed] (0,0) -- (1, {1/(2*sqrt(2))});
\draw[dashed] (1, {1/(2*sqrt(2))+.4}) -- (0,.4);
%the sets S'
\draw[draw=black,fill={black!20!white}]
	(.1,{.1/(2*sqrt(2))}) -- (.4, {.4/(2*sqrt(2))}) -- (.4, {.4+.4/(2*sqrt(2))}) -- (.1, {.4+.1/(2*sqrt(2))}) -- (.1, {.1/(2*sqrt(2))});
\draw(.25, {.2+.25/(2*sqrt(2))}) node{$S'$};
\draw[draw=black,fill={black!20!white}]
	(.6, {.6/(2*sqrt(2))}) -- (.9, {.9/(2*sqrt(2))}) -- (.8, {.35+.3/(2*sqrt(2))}) -- (.95, {.5+.3/(2*sqrt(2))}) -- (.9, {.4+.9/(2*sqrt(2))}) -- (.6, {.4+.6/(2*sqrt(2))}) --(.65,.5) -- (.5,.35)-- (.6, {.6/(2*sqrt(2))});
\draw(.72, .45) node{$S'$};
%arrows
\draw[latex-latex] (-.1,0) -- (-.1,.4);
\draw(-.1, .2) node[left]{$p$};
\draw[latex-latex] (1.1,0) -- (1.1, {1/(2*sqrt(2))});
\draw(1.1,{1/(4*sqrt(2))}) node[right]{$\alpha$};
\end{tikzpicture}
 \caption{Sets $S'$ which are not of bounded remainder for the continuous irrational rotation with slope $\alpha$ (given $p \notin \bz \alpha (\modu 1)$). \label{fig:modpgram}}
 \end{figure}

Secondly, in neither Theorem \ref{thm:mainpoly} nor \ref{thm:mainconvex} can we replace ``for almost all $\alpha$'' by ``for all irrational $\alpha$''. This is clarified by the following:
\begin{theorem}
 \label{thm:mainneg}
\quad 
\begin{enumerate-math-abc}
 \item \label{it:negpoly} For uncountably many $\alpha>0$ there exist triangles in $I^2$ with no edge of slope $\alpha$ which are not bounded remainder sets for the continuous irrational rotation with slope $\alpha$ and arbitrary starting point.
 \item \label{it:negconvex} For uncountably many $\alpha>0$ there exist discs in $I^2$ which are not bounded remainder sets for the continuous irrational rotation with slope $\alpha$ and arbitrary starting point.
 \item \label{it:specset} The triangle with vertices $(0,0)$, $(1,0)$ and $(0,1)$ is a bounded remainder set for every slope $\alpha >0$ and every starting point $\mathbf{x} \in I^2$. 
\end{enumerate-math-abc}
\end{theorem}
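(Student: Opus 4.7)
The plan is to treat the three parts separately using Fourier analysis on $\bt^2$.

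For part (c), I would compute the Fourier coefficients of $\chi_{T_0}$ directly, where $T_0$ denotes the triangle with vertices $(0,0)$, $(1,0)$, $(0,1)$. A direct calculation yields
\begin{equation*}
\widehat{\chi_{T_0}}(m,n) = \begin{cases}
\frac{1}{2} & (m,n) = (0,0), \\
\frac{1}{2\pi i n} & m=0,\ n \ne 0, \\
\frac{1}{2\pi i m} & n=0,\ m \ne 0, \\
-\frac{1}{2\pi i n} & m=n \ne 0, \\
0 & \text{otherwise,}
\end{cases}
\end{equation*}
so the Fourier support lies on the three lines $\{m=0\}$, $\{n=0\}$, $\{m=n\}$, reflecting the three edge directions of $T_0$. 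I would then write
\begin{equation*}
\Delta_T(T_0,\alpha,\vx) = \sum_{(m,n) \ne (0,0)} \widehat{\chi_{T_0}}(m,n)\, e^{2\pi i(mx_1+nx_2)}\, \frac{e^{2\pi i(m+n\alpha)T}-1}{2\pi i(m+n\alpha)},
\end{equation*}
justifying termwise integration by mollifying $\chi_{T_0}$ with a smooth approximate identity and passing to the limit via dominated convergence on the discrete Fourier side (our absolute bound being mollifier-independent). On the support, the resonance denominator $m+n\alpha$ reduces to $n\alpha$, $m$, or $n(1+\alpha)$ respectively, each bounded below in modulus by $\min(\alpha, 1)$, and combined with the $O(1/|n|)$ coefficient decay this gives a bound of the form $C(\alpha)\sum_{n \ne 0} n^{-2}$, absolutely convergent uniformly in $T$ and $\vx$, so $|\Delta_T|$ stays bounded.

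For parts (a) and (b), I would construct $\alpha = [0; a_1, a_2, \ldots]$ with partial quotients $a_{k+1}$ growing fast enough to strongly violate the condition following Theorem~\ref{thm:mainconvex}; the freedom in the tail of $(a_k)$ provides uncountably many such $\alpha$. For (a), pick any triangle $S$ whose three edge slopes are bounded away from $\alpha$. The Fourier coefficients of $\chi_S$ at the resonance frequencies $(m,n)=(-p_k,q_k)$ then have the generic polygon decay $\sim 1/q_k^2$, and when divided by the resonance denominator $|q_k\alpha-p_k| \asymp 1/(q_k a_{k+1})$ each such resonance contributes a term of order $a_{k+1}/q_k$ to the Fourier expansion of $\Delta_T$. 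Choosing a subsequence $T_n$ aligned with $1/(q_n\alpha-p_n)$ so that the $n$-th resonance is phase-activated, and using the classical Diophantine estimates $|q_l\alpha-p_l|\asymp 1/(q_l a_{l+1})$ to control contributions from other $l$, one extracts $|\Delta_{T_n}| \gtrsim a_{n+1}/q_n$, which diverges once $a_{k+1} \gg q_k$. For (b), a disc $D$ has $|\widehat{\chi_D}(\xi)| \sim |\xi|^{-3/2}$ by van der Corput, so the analogous per-resonance contribution is of order $a_{k+1}/q_k^{1/2}$; the same subsequence argument yields $|\Delta_{T_n}| \to \infty$ for sufficiently Liouville $\alpha$. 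The arguments extend to arbitrary starting points $\vx$ by adjusting the phase windows.

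The chief technical obstacle is controlling possible destructive interference in the Fourier sum when establishing the unbounded lower bounds in (a) and (b): one must select $T_n$ carefully so that a single resonance dominates while the contributions from other frequencies are bounded by a fraction of the leading term. This is managed by the rapid growth of $a_{k+1}$, which makes off-target resonances contribute strictly smaller terms via the Diophantine inequalities above. For (c), the only subtlety is the rigorous passage from the $L^2$ Fourier expansion to a pointwise identity along the trajectory, handled routinely via the mollification argument.
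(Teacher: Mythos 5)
Your argument for part \eqref{it:specset} is correct and takes a genuinely different route from the paper. You diagonalize on the Fourier side: the spectrum of $\chi_{T_0}$ lies on the three lines $m=0$, $n=0$, $m=n$, on which the resonance factor $|m+n\alpha|$ is bounded below by $\min(1,\alpha)$, so together with the $O(1/|n|)$ coefficient decay one gets an absolutely convergent majorant uniform in $T$ and $\vx$; the mollification step is routine since the trajectory meets $\partial T_0$ transversally. The paper instead passes to the one-dimensional function $\tau_S$ via Lemma \ref{lem:equivs} and exhibits an explicit continuous solution $g(x)=x(x-1)/(2\alpha(1+\alpha))$ of the cohomological equation, invoking Gottschalk--Hedlund. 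The two arguments encode the same cancellation; yours avoids guessing $g$, the paper's avoids the distributional bookkeeping.

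For parts \eqref{it:negpoly} and \eqref{it:negconvex} your strategy (resonances at the convergent frequencies $(-p_k,q_k)$ for Liouville-type $\alpha$) is the right one, but two essential steps are missing. First, the decay rates you quote are oscillatory envelopes, not lower bounds: the vertex-sum formula for a polygon and the Bessel asymptotics $\widehat{\chi_D}(\xi)\approx c\,r^{1/2}|\xi|^{-3/2}\cos(2\pi r|\xi|-3\pi/4)$ for a disc both have zeros, so $\widehat{\chi_S}(-p_k,q_k)$ can be anomalously small, or vanish, precisely along the resonant sequence, and then your ``contribution of order $a_{k+1}/q_k$'' (resp.\ $a_{k+1}/q_k^{1/2}$) is not there. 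The paper must work to rule this out: in (a) it chooses the triangle parameter $K$ so that $a=1-K\alpha$ satisfies $\norm{q_l a}\geq c/q_l^2$ (a metric Diophantine selection), and in (b) it imposes the parity condition that $p_l$ be even for infinitely many odd $l$ and proves directly that the Riemann-sum defect $G_m-\pi/4$ exceeds $c\,m^{-3/2}$ on a set of shifts of measure $\geq 1/(6m)$. An analogous selection is indispensable in your argument and is absent.

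Second, your interference control does not work as stated. For the $\alpha$ you construct, the sum $\sum_{(m,n)}|\widehat{\chi_S}(m,n)|/|m+n\alpha|$ is wildly divergent (already the single term $n=q_l$, $m=-p_l$ blows up), and the harmonics $(-jp_k,jq_k)$, $j\geq 2$, contribute terms of the same order $a_{k+1}/(j^3q_k)$ as your leading term, so ``rapid growth of $a_{k+1}$ makes off-target resonances strictly smaller'' is false and a pointwise single-resonance-domination argument is not available. The clean repair is to avoid pointwise sums: if $\Delta_T$ were bounded at one starting point it would, by Lemma \ref{lem:equivs} and Remark \ref{rem:shifts}, be bounded for a.e.\ $\vx$, hence bounded in $L^2(d\vx)$, hence each Fourier coefficient $\widehat{\chi_S}(m,n)\,(e^{2\pi i(m+n\alpha)T}-1)/(2\pi i(m+n\alpha))$ would be bounded; taking $T$ so that the numerator has modulus $2$ at $(m,n)=(-p_k,q_k)$ then contradicts the (still-to-be-proved) coefficient lower bound with no interference to control. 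The paper sidesteps all of this by working with Ostrowski blocks and explicit Riemann-sum estimates. As written, your (a) and (b) are an outline with these two gaps rather than a proof.
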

Theorem \ref{thm:mainneg} \eqref{it:specset} illustrates that for very special polygons $S$, Theorem \ref{thm:mainpoly} does actually hold for all irrational $\alpha$. Other trivial examples of such special sets are rectangles of the form $[0,\gamma) \times [0,1)$ (or $[0,1)\times[0,\gamma)$), where $0 < \gamma \leq 1$.

Finally, let us point out that Theorems \ref{thm:mainpoly} and \ref{thm:mainconvex}, and their proofs, give information on the behavior of discrepancies of the continuous irrational rotation on the unit square. Let $\mathcal{B}$ denote a certain class of measurable subsets of $I^2$. Then by the discrepancy $D_T^{(\mathcal{B})}$ of a continuous irrational rotation with slope $\alpha>0$ and starting point $\mathbf{x} \in I^2$ with respect to $\mathcal{B}$ we mean
\begin{equation*}
 D_T^{(\mathcal{B})} := \sup_{S \in \mathcal{B}} \Delta_T (S, \alpha, \mathbf{x}),
\end{equation*}
with $\Delta_T (S, \alpha, \mathbf{x})$ defined in \eqref{eq:discrepancy}. The most extensively studied case in the classical theory of irregular distribution is that when $\mathcal{B}$ is the class of axis-parallel rectangles. Theorem \ref{thm:mainpoly} tells us that in this case, we have 
\begin{equation*}
 \Delta_T (S, \alpha, \mathbf{x}) = O(1)
\end{equation*}
for all $\mathbf{x}$, almost all $\alpha$ and all $S \in \mathcal{B}$. Moreover, by a careful consideration of the constants involved in the proof of Theorem \ref{thm:mainpoly}, one can verify that the $O$-constant will depend only on $\alpha$, and not on the choice of rectangle $S$. As a consequence, we obtain the following result, previously shown by Drmota \cite{drmota} (see also \cite{tichy}).
\begin{corollary}
\label{cor:discr1}
 The discrepancy $D_T^{(\mathcal{B})}$ of the continuous irrational rotation with slope $\alpha$ and starting point $\mathbf{x}$ with respect to the class $\mathcal{B}$ of axis-parallel rectangles in $I^2$ is 
 \begin{equation*}
  D_T^{(\mathcal{B})} = O(1)
 \end{equation*}
for all $\mathbf{x} \in I^2$ and almost all $\alpha >0$.
\end{corollary}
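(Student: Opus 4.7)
The plan is to derive this corollary directly from Theorem \ref{thm:mainpoly} combined with a uniformity check on the implicit constant, as anticipated in the paragraph preceding the statement. Any axis-parallel rectangle $S = [a_1, b_1) \times [a_2, b_2) \subseteq I^2$ is a polygon whose four edges are either horizontal or vertical; since $\alpha > 0$, none of these edges has slope $\alpha$, and so the hypothesis of Theorem \ref{thm:mainpoly} is satisfied by every $S \in \mathcal{B}$. Moreover, Theorem \ref{thm:mainpoly} is valid for every $\alpha$ whose continued fraction expansion satisfies the explicit condition displayed after Theorem \ref{thm:mainconvex}, and this condition defines a single set of full Lebesgue measure, independent of $S$. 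Hence, for every such $\alpha$ and every $\mathbf{x} \in I^2$, and for every individual $S \in \mathcal{B}$, the pointwise bound $|\Delta_T(S, \alpha, \mathbf{x})| = O(1)$ is already available.

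The actual work is to show that this $O(1)$ bound can be chosen uniformly over $\mathcal{B}$. For this I would revisit the proof of Theorem \ref{thm:mainpoly} in the special case of axis-parallel rectangles and track explicitly which features of $S$ enter the constant. The quantities that appear are the Fourier coefficients $\widehat{\chi_S}(k_1, k_2)$, the geometric data of the edges (number, lengths, and angular distance from the direction of slope $\alpha$), and the measure $\lambda(S)$. For rectangles in $\mathcal{B}$ all of these admit universal bounds: the Fourier coefficients factor into one-dimensional integrals dominated by $\min(1, 1/(\pi |k_j|))$; the perimeter is at most $4$; the number of edges is $4$; the slopes $0$ and $\infty$ are uniformly separated from any fixed $\alpha > 0$; and $\lambda(S) \leq 1$. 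Taking the supremum over $S \in \mathcal{B}$ then gives
\begin{equation*}
 D_T^{(\mathcal{B})} = \sup_{S \in \mathcal{B}} |\Delta_T(S, \alpha, \mathbf{x})| = O(1),
\end{equation*}
with implicit constant depending only on $\alpha$ (and, at worst, on $\mathbf{x}$).

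The principal obstacle is not conceptual but a careful bookkeeping exercise inside the proof of Theorem \ref{thm:mainpoly}: one must confirm that every intermediate estimate admits a bound depending only on $\alpha$ and the universal geometric data listed above, and that no hidden dependence on the position or side-lengths of the rectangle slips into the constants.
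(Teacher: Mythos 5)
Your proposal matches the paper's own treatment: the paper likewise deduces the corollary by observing that every axis-parallel rectangle is a polygon with no edge of slope $\alpha$, invoking Theorem \ref{thm:mainpoly} on the single full-measure set of admissible $\alpha$, and then asserting that a careful tracking of the constants in that proof shows the bound depends only on $\alpha$ and not on $S$. One small correction: the paper's proof of Theorem \ref{thm:mainpoly} is not Fourier-analytic but proceeds via hat functions, Ostrowski expansions and Koksma's inequality, so the bookkeeping concerns the hat-function parameters $H$, $a$, $b$ (equivalently the edge slopes' separation from $\alpha$ and the side lengths) rather than Fourier coefficients; your list of uniformly bounded geometric data is nonetheless the right one.
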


As clarified by the example in Figure \ref{fig:fullpgram}, an analogous result does not hold if $\mathcal{B}$ is the class of \emph{all} rectangles. It follows that the isotropic discrepancy, i.e.\ the discrepancy with respect to the class of all convex sets, cannot be bounded. However, if we let $\mathcal{B}$ be the class $\mathcal{D}$ of all discs in $I^2$, then we can attain a result analogous to Corollary \ref{cor:discr1}. Theorem \ref{thm:mainconvex} tells us that for all $S \in \mathcal{D}$, we have
\begin{equation*}
 \Delta_T(S, \alpha, \mathbf{x}) = O(1)
\end{equation*}
for all $\mathbf{x}$ and almost all $\alpha$, and from the proof of Theorem \ref{thm:mainconvex} it is not difficult to see that the $O$-constant can be made independent of the size and position of the disc $S$. We thus get:
\begin{corollary}
\label{cor:discr2}
The discrepancy $D_T^{(\mathcal{D})}$ of the continuous irrational rotation with slope $\alpha$ and starting point $\mathbf{x}$ with respect to the class $\mathcal{D}$ of discs in $I^2$ is
\begin{equation*}
 D_T^{(\mathcal{D})}=O(1)
\end{equation*}
for all $\mathbf{x} \in I^2$ and almost all $\alpha>0$.
\end{corollary}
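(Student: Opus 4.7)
The plan is to apply Theorem \ref{thm:mainconvex} and track the dependence of its $O$-constant on the set $S$, verifying that when $S$ ranges over discs this constant can be bounded uniformly. First I would note that every disc $D \subset I^2$ trivially satisfies the hypothesis of Theorem \ref{thm:mainconvex}: a disc of radius $r$ has $C^2$ boundary with constant positive curvature $1/r$. Hence, for a.e.\ $\alpha$ and every $\mathbf{x}$, each disc gives a bounded $\Delta_T$; what remains is to make the bound uniform in $D$.

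The proof of Theorem \ref{thm:mainconvex} should, in the spirit of the classical Fourier-analytic approach, pass through an expansion of the form
\begin{equation*}
\Delta_T(S, \alpha, \mathbf{x}) = \sum_{\mathbf{k} \neq \mathbf{0}} \hat{\chi}_S(\mathbf{k})\, e^{2\pi i \mathbf{k} \cdot \mathbf{x}} \cdot \frac{e^{2\pi i (k_1 + \alpha k_2) T} - 1}{2\pi i (k_1 + \alpha k_2)},
\end{equation*}
so that
\begin{equation*}
|\Delta_T(S, \alpha, \mathbf{x})| \leq \frac{1}{\pi} \sum_{\mathbf{k} \neq \mathbf{0}} \frac{|\hat{\chi}_S(\mathbf{k})|}{|k_1 + \alpha k_2|}.
\end{equation*}
The only set-dependent ingredient in this estimate is the Fourier transform $\hat{\chi}_S$. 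For a disc $D = D(\mathbf{c}, r)$ with center $\mathbf{c}$ and radius $r \leq 1/2$, a direct computation gives the explicit formula
\begin{equation*}
\hat{\chi}_D(\mathbf{k}) = \frac{r\, J_1(2\pi r |\mathbf{k}|)}{|\mathbf{k}|}\, e^{-2\pi i \mathbf{c} \cdot \mathbf{k}},
\end{equation*}
and the standard Bessel bounds $|J_1(z)| \leq \min(z/2,\, c_0 z^{-1/2})$ for $z > 0$ yield
\begin{equation*}
|\hat{\chi}_D(\mathbf{k})| \leq \min\bigl( \pi r^2,\, c_0 r^{1/2} |\mathbf{k}|^{-3/2} \bigr) \leq \frac{c_1}{|\mathbf{k}|^{3/2}},
\end{equation*}
where $c_1$ is an absolute constant. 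Crucially, this bound is independent of both the center $\mathbf{c}$ and the radius $r$.

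Substituting this uniform estimate into the series above reduces the problem to bounding
\begin{equation*}
\sum_{\mathbf{k} \neq \mathbf{0}} \frac{1}{|\mathbf{k}|^{3/2} |k_1 + \alpha k_2|},
\end{equation*}
which is precisely the number-theoretic quantity already controlled in the proof of Theorem \ref{thm:mainconvex} via the continued fraction condition on $\alpha$. Since the only $D$-dependence has been absorbed into the prefactor $c_1$, the resulting bound on $\Delta_T$ depends only on $\alpha$, not on the particular disc chosen. The main obstacle is therefore essentially bookkeeping: one must extract from the proof of Theorem \ref{thm:mainconvex} the precise step at which the curvature-dependent constant is separated from the diophantine sum, and check that substituting the uniform disc estimate does not disturb the rest of the argument. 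Once this is verified, Corollary \ref{cor:discr2} follows immediately upon taking the supremum over $\mathcal{D}$.
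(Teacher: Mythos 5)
Your route is genuinely different from the paper's. The paper does not prove Theorem \ref{thm:mainconvex} by Fourier analysis at all: it projects along lines of slope $\alpha$ to reduce the problem to a one-dimensional one (Lemma \ref{lem:equivs}), shows that the resulting profile $T_S$ is a \emph{dome function}, and then proves Proposition \ref{prop:domefncbrf} via Ostrowski expansions and the Koksma--Hlawka inequality. The corollary for discs is then obtained by observing that the only set-dependent parameters entering that argument are $m=2$ and $c=8/\sqrt{k(1+\alpha^2)}$ in the growth condition \eqref{eq:growthcond}, where $k=1/r$ is the curvature; since $r\leq 1/2$, these are uniform over all discs. Your Fourier-analytic alternative, with the uniform Bessel-function decay $|\hat{\chi}_D(\mathbf{k})|\leq c_1|\mathbf{k}|^{-3/2}$, is closer in spirit to Drmota's original proof of the rectangle case (Corollary \ref{cor:discr1}) and has the advantage of making the uniformity in $D$ completely transparent, at the price of the usual technical work needed to justify term-by-term integration of the Fourier series of $\chi_D$ along a line (a null set, so $L^2$ convergence alone does not suffice).

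There is, however, one genuine gap: your claim that $\sum_{\mathbf{k}\neq\mathbf{0}}|\mathbf{k}|^{-3/2}|k_1+\alpha k_2|^{-1}$ is ``precisely the number-theoretic quantity already controlled in the proof of Theorem \ref{thm:mainconvex}'' is false, since no such Fourier sum appears anywhere in that proof; the diophantine input there is Lemma \ref{lem:cfsum}, a condition on continued fraction partial quotients, which does not directly yield convergence of your series. You must therefore establish the convergence of this series for almost every $\alpha$ yourself. This is true but not free: the pointwise metric bound $\norm{q\alpha}\geq c(\alpha,\varepsilon)q^{-1-\varepsilon}$ is too weak (it only gives terms of size $|k_2|^{-1/2+\varepsilon}$ after summing over $k_1$, which do not converge), and one instead needs an averaged estimate such as $\sum_{q\leq Q}\norm{q\alpha}^{-1}=O(Q^{1+\varepsilon})$ for almost every $\alpha$, combined with partial summation against the weight $|k_2|^{-3/2}$. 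Until that step is supplied, the argument is incomplete.
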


The rest of the paper is organized as follows. In Section \ref{sec:proofs} we present necessary preliminary material, and give the proofs of Theorems \ref{thm:mainpoly} and \ref{thm:mainconvex}. Section \ref{sec:neg} is devoted to the proof of Theorem \ref{thm:mainneg}. 

%%%%%%%%%%%%%%%%%%%%%%%%%%%%%%%%%%%%%%%%% Proofs %%%%%%%%%%%%%%%%%%%%%%%%%%%%%%%%%%%%%%%%%

\section{Preliminaries and proofs of Theorems \ref{thm:mainpoly} and \ref{thm:mainconvex}}
\label{sec:proofs}
\subsection{Continued fractions}
We begin by briefly reviewing some well-known facts about continued fractions. For an irrational $\alpha \in (0,1)$, let 
\begin{equation*}
[0; a_1, a_2, a_3, \ldots ]
\end{equation*}
be its continued fraction expansion, and denote by $p_n / q_n$ its $n$th convergent. The numerators $p_n$ and denominators $q_n$ are given recursively by
\begin{equation*}
 \begin{aligned}
  q_0 = 1, \quad &q_1 = a_1 , \quad &q_{n+1} = a_{n+1} q_n + q_{n-1} , \\
  p_0 = 0, \quad &p_1 = 1 , \quad &p_{n+1} = a_{n+1} p_n + p_{n-1} . \\
 \end{aligned}
\end{equation*}
It follows readily from these recurrences that 
\begin{equation}
\label{eq:cfminusone}
 p_nq_{n+1} - p_{n+1} q_n = (-1)^{n+1}.
\end{equation}
The $n$th convergent $p_n/q_n$ is greater than $\alpha$ for every odd value of $n$, and smaller than $\alpha$ for every even value of $n$.
It is easy to see that $\lim_{n \rightarrow \infty} p_n / q_n = \alpha$, and moreover we have the error bounds
\begin{equation}
\label{eq:cferror}
 \frac{1}{(a_{n+1}+2)q_n^2} \leq \left| \alpha - \frac{p_n}{q_n} \right| \leq \frac{1}{a_{n+1}q_n^2} .
\end{equation}

Every non-negative integer $N$ has a unique expansion 
\begin{equation*}
 N = \sum_{i=0}^s b_i q_i, \text{ with } b_s>0 ; \quad 0 \leq b_i \leq a_{i+1} , \quad 0 \leq i \leq s .
\end{equation*}
We will refer to this as the \emph{Ostrowski expansion} of $N$ to base $\alpha$. 

Finally, we will need the following result, which follows from well-known facts in metric theory for continued fractions (see e.g.\ \cite{khinchin}).
\begin{lemma}
\label{lem:cfsum}
For almost every irrational $\alpha \in (0,1)$ and every $m>0$, the sum 
\begin{equation*}
 \sum_{l=0}^s \frac{a_{l+1}}{q_l^{1/m}} \sum_{k=1}^{l+1} a_k
\end{equation*}
is uniformly bounded in $s$.
\end{lemma}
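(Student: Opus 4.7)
The plan is to combine two classical facts from the metric theory of continued fractions: (i) the best-approximation denominators $q_l$ grow at least geometrically in $l$, and (ii) for almost every $\alpha$ the partial quotients $a_n$ grow at most polynomially in $n$. Once both are in place, the assertion reduces to a routine convergence estimate for a geometric-times-polynomial series.

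For (i), the recursion $q_{l+1} = a_{l+1} q_l + q_{l-1}$ together with $a_{l+1} \geq 1$ yields $q_l \geq F_l$, the $l$th Fibonacci number, and hence $q_l \geq c\phi^l$ with $\phi = (1+\sqrt{5})/2$. This is entirely deterministic and gives $q_l^{1/m} \geq c^{1/m}\phi^{l/m}$. For (ii), I would apply the Borel--Cantelli lemma to the standard uniform tail bound
\begin{equation*}
 \lambda\bigl\{ \alpha \in (0,1) : a_n(\alpha) \geq k \bigr\} \leq \frac{C}{k},
\end{equation*}
which follows from the invariance of the Gauss measure $\frac{1}{\log 2}\cdot\frac{dx}{1+x}$ under the Gauss map $T(x)=\{1/x\}$ together with the fact that Gauss measure is equivalent to Lebesgue measure with bounded density. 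Since $\sum_n n^{-2} < \infty$, Borel--Cantelli produces an $\alpha$-null set $N$ off which $a_n(\alpha) \leq n^2$ for all $n \geq N_0(\alpha)$. It is important to note that $N$ does not depend on $m$, so the single exceptional set handles all $m>0$ simultaneously, as the statement requires.

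With both ingredients in hand, I would argue as follows. For $\alpha \notin N$ and $l \geq N_0$ we have $a_{l+1} \leq (l+1)^2$ and $\sum_{k=1}^{l+1} a_k \leq \sum_{k \leq N_0} a_k + \sum_{N_0 < k \leq l+1} k^2 = O(l^3)$. Thus the $l$th term of the series is bounded by $C_{\alpha,m}(l+1)^5 \phi^{-l/m}$, which is summable; the finitely many terms with $l < N_0$ are absorbed into the constant. Hence the partial sums up to $s$ are bounded by a constant depending only on $\alpha$ and $m$, not on $s$. The only nontrivial step is the Borel--Cantelli application, but the $O(1/k)$ tail estimate is classical, so no essential difficulty arises.
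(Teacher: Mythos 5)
Your proof is correct and is precisely the standard argument the paper alludes to when it cites the metric theory of continued fractions: the deterministic Fibonacci lower bound $q_l \geq c\phi^l$ combined with the Borel--Cantelli consequence that $a_n(\alpha) \leq n^2$ for all large $n$ for almost every $\alpha$, after which the series converges by comparison with $\sum_l l^5 \phi^{-l/m}$. The observation that the exceptional null set is independent of $m$ is the one point worth making explicit, and you make it.
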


\subsection{Functions of bounded remainder}
It is not difficult to show that the question of whether $S \subset I^2$ is a bounded remainder set for the continuous two-dimensional irrational rotation is essentially a one-dimensional problem. By making an appropriate projection, the question can be restated as that of whether a certain associated \emph{function} is of bounded remainder.
\begin{definition}
\label{def:brf}
 Let $f : \br \mapsto \bc$ be a $1$-periodic function which is integrable over $[0,1]$. We say that $f$ is a \emph{bounded remainder function} with respect to $\alpha \in \br \setminus \bq$ if there is a constant $C=C(f,\alpha)$ such that 
\begin{equation*}
 \left| \sum_{k=0}^{N-1} f(k\alpha) - N \int_0^1 f(x) \, dx \right| \leq C 
\end{equation*}
for all integers $N>0$.
\end{definition}
Bounded remainder functions have been studied by several authors, see e.g.\ \cite{hellekalek}, or \cite{schoissen} and the references therein.

We will consider two special classes of functions: \emph{hat functions} and \emph{dome functions}.
\begin{definition}
\label{def:hatfnc}
 We say that $T \,: \, \br \mapsto [0, \infty)$ is a \emph{hat function} if $T$ is supported on the interval $[0,b]$, with $b >0$, and 
\begin{equation}
\label{eq:hatfnc}
 T(x) = \begin{cases}
         \frac{H}{a} x \text{ ,} & 0 \leq x \leq a \text{ ;}\\
         -\frac{H}{b-a} (x-b) \text{ ,} & a < x \leq b \text{ ,}
        \end{cases}
\end{equation}
for some $0<a<b$ and $H>0$. 
\end{definition}

\begin{definition}
 \label{def:domefnc}
 We say that a continuous function $T \,: \, \br \mapsto [0, \infty)$ supported on $[0,B]$, with $B>0$, is a \emph{dome function} if it satisfies the following two conditions:
 \begin{enumerate} 
  \item \label{it:domediff} $T$ is concave and twice differentiable on the open interval $(0,B)$.
  \item \label{it:domegrowth} There exist $\varepsilon >0$, $m>0$ and $c>0$ such that
  \begin{equation}
  \label{eq:growthcond}
  \begin{aligned}
   &\left| T(x) \right| \leq c \cdot x^{1/m} \quad &\text{ for all } \, 0 \leq x < \varepsilon ; \\
   &\left| T(B-x)\right| \leq c \cdot x^{1/m} \quad &\text{ for all } \, 0 \leq x < \varepsilon .\\
  \end{aligned}
 \end{equation}
 \end{enumerate}
\end{definition}

We will establish and prove the following two results, which will be crucial for the proofs of Theorems \ref{thm:mainpoly} and \ref{thm:mainconvex} later on.
\begin{proposition}
 \label{prop:hatfncbrf}
 Let $\tau (x) = \sum_{m\in \bz} T(x+m)$, where $T$ is a hat function. Then $\tau$ is a bounded remainder function with respect to almost every $\alpha \in \br \setminus \bq$. 
\end{proposition}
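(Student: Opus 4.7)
The plan is to use Fourier analysis on $\tau$ together with continued-fraction estimates for $\alpha$. Assume $b\leq 1$ for concreteness (the case $b>1$ is analogous); then $\tau|_{[0,1)}=T$, so $\hat\tau(m)=\hat T(m)$. Because $T$ is continuous on $[0,b]$ with $T(0)=T(b)=0$ and piecewise linear, its distributional second derivative is a signed combination of Dirac masses at $0$, $a$, $b$, and two integrations by parts give
$$
\hat\tau(m) = \frac{H}{4\pi^2 m^2 \, a(b-a)}\bigl(b(e^{-2\pi ima}-1) - a(e^{-2\pi imb}-1)\bigr), \qquad m \neq 0.
$$
Using $|e^{-2\pi imx}-1|\leq 2\pi\|mx\|$, this refines to $|\hat\tau(m)|\lesssim(\|ma\|+\|mb\|)/m^2$. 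The error becomes
$$
E_N := \sum_{k=0}^{N-1}\tau(k\alpha) - N\hat\tau(0) = \sum_{m\neq 0}\hat\tau(m)\,\frac{e^{2\pi imN\alpha}-1}{e^{2\pi im\alpha}-1},
$$
and combined with $|H_N(m\alpha)|\leq 1/(2\|m\alpha\|)$, the problem reduces to showing that the series $\sum_{m\geq 1}(\|ma\|+\|mb\|)/(m^2\|m\alpha\|)$ is bounded for almost every $\alpha$.

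To bound this series I would split the $m$-range into the blocks $q_l\leq m<q_{l+1}$ indexed by the convergents of $\alpha$. On such a block the best-approximation property of the $q_l$ yields $\|m\alpha\|\geq\|q_l\alpha\|\sim 1/q_{l+1}$, with values near this minimum attained only at the ``resonant'' $m=jq_l$ for $1\leq j\leq a_{l+1}$, which can be enumerated through the Ostrowski expansion. Using the trivial bound $\|m\beta\|\leq 1/2$ for $\beta\in\{a,b\}$, the resonant contribution to the block is of order $a_{l+1}/q_l$; the non-resonant $m$ satisfy $\|m\alpha\|\gtrsim 1/q_l$ and have their block contribution controlled by $(\|m\beta\|/m^2)$-decay together with the block length $q_{l+1}-q_l\leq a_{l+1}q_l$, again giving $O(a_{l+1}/q_l)$. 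Summing over $l$ yields a total bound dominated by a series in the partial quotients of $\alpha$ of the type handled by Lemma~\ref{lem:cfsum}, which is finite for almost every $\alpha$.

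The main technical obstacle is that the refined Fourier bound is \emph{essential}: without the additional factor $\|ma\|+\|mb\|$ in the numerator, the naive estimate yields $\sum_m 1/(m^2\|m\alpha\|)$, which in fact diverges even for $\alpha$ of bounded type (each block contributes a uniform constant, and there are infinitely many blocks). This extra factor stems from the cancellation identity $\frac{H}{a}-\frac{Hb}{a(b-a)}+\frac{H}{b-a}=0$ among the coefficients of the three Dirac masses of $T''$---the algebraic reflection of the continuity of $\tau$ at the endpoints of the support of $T$. The delicate point is that at the resonant $m$, where $\|m\alpha\|$ is particularly small, the factor $\|m\beta\|$ need not itself be small; making the resulting block estimate quantitative, uniformly in the Ostrowski expansion of $m$, is the technical heart of the proof, and is what is responsible for the precise dependence on the partial quotients of $\alpha$ appearing in the condition on $\alpha$ in the main theorems.
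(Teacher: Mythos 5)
Your route is genuinely different from the paper's: the paper never touches Fourier coefficients, but instead uses the Ostrowski expansion of $N$ (Lemma \ref{lem:altsum}) to reduce everything to perturbed sums over the rationals $k/q_l$, and then controls the perturbation with Koksma's inequality. A Fourier-analytic proof along your lines is viable, but as written your block estimate contains two concrete errors, and your diagnosis of where the difficulty lies is based on a false premise. First, it is not true that the non-resonant $m\in[q_l,q_{l+1})$ satisfy $\norm{m\alpha}\gtrsim 1/q_l$: for example $m=q_{l+1}-q_l=(a_{l+1}-1)q_l+q_{l-1}$ is not a multiple of $q_l$, yet $\norm{m\alpha}\asymp 1/q_{l+1}$. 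More generally, many $m$ of the form $jq_l+q_{l-1}$ exhibit near-cancellation between $j\norm{q_l\alpha}$ and $\norm{q_{l-1}\alpha}$, so the resonant/non-resonant dichotomy as you draw it does not hold, and the claimed $O(a_{l+1}/q_l)$ bound for the non-resonant part does not follow from the ingredients you list (with only $\norm{m\beta}\leq 1/2$ and $\norm{m\alpha}\gtrsim 1/q_l$ one gets $O(1)$ per block, which is useless). Second, your ``main technical obstacle'' is not an obstacle: the series $\sum_{m\geq 1} 1/(m^2\norm{m\alpha})$ does \emph{not} diverge for $\alpha$ of bounded type. The correct way to bound a block is via spacing: the points $\{m\alpha\}$, $0\leq m<q_{l+1}$, are pairwise separated by at least $\norm{q_l\alpha}\geq 1/(2q_{l+1})$, so the number of $m<q_{l+1}$ with $\norm{m\alpha}\leq t$ is $O(q_{l+1}t+1)$, whence
\begin{equation*}
\sum_{q_l\leq m<q_{l+1}}\frac{1}{m^2\norm{m\alpha}}\;\lesssim\;\frac{1}{q_l^2}\sum_{k=1}^{2q_{l+1}}\frac{q_{l+1}}{k}\;\lesssim\;\frac{a_{l+1}\log q_{l+1}}{q_l}\;\lesssim\;\frac{a_{l+1}}{q_l}\sum_{i=1}^{l+1}a_i ,
\end{equation*}
using $\log q_{l+1}\lesssim\sum_{i\leq l+1}a_i$. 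Summing over $l$ and invoking Lemma \ref{lem:cfsum} shows that $\sum_m 1/(m^2\norm{m\alpha})$ is finite for almost every $\alpha$ (for bounded type each block contributes $O(\log q_l/q_l)$, which is summable; you appear to have confused this series with $\sum_m 1/(m\norm{m\alpha})$, which indeed always diverges).

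The good news is that this repair makes your proof \emph{simpler} than you anticipated: the crude bound $|\hat\tau(m)|\lesssim_{T} 1/m^2$ (valid because $T''$ is a finite signed measure) already suffices, the refined numerator $\norm{ma}+\norm{mb}$ and the cancellation identity among the Dirac masses are not needed, and no Diophantine information about $a$ or $b$ enters. Once the block estimate above is in place, you get $|E_N|\leq\sum_{m\neq 0}|\hat\tau(m)|/(2\norm{m\alpha})<\infty$ uniformly in $N$ for all $\alpha$ satisfying the condition of Lemma \ref{lem:cfsum}, which is the same full-measure set the paper works with. Compared with the paper's argument, your approach (once corrected) is shorter and more transparent, and immediately extends to any periodic function with $\hat f(m)=O(1/m^2)$; what it loses is the explicit control of the implied constant in terms of the geometry of $T$ (through $\frac{H}{a}+\frac{H}{b-a}$), which the paper's more hands-on method tracks and which matters for the uniform statements such as Corollary \ref{cor:discr1}. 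Do also state explicitly that the Fourier series of $\tau$ converges absolutely and uniformly (so that the interchange of the sums over $k$ and $m$ is legitimate); this is immediate from $\hat\tau(m)=O(1/m^2)$ but should be said.
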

\begin{proposition}
 \label{prop:domefncbrf}
 Let $\tau (x) = \sum_{m\in \bz} T(x+m)$, where $T$ is a dome function. Then $\tau$ is a bounded remainder function with respect to almost every $\alpha \in \br \setminus \bq$.
\end{proposition}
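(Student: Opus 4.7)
The plan is to proceed via Fourier analysis. Since $\tau$ is the $1$-periodization of the compactly supported continuous function $T$, its Fourier coefficients coincide with the Fourier transform values of $T$ at integers, $\hat{\tau}(n) = \hat{T}(n) := \int_0^B T(x)e^{-2\pi i n x}\,dx$. Expanding $\tau(k\alpha)$ in its (absolutely convergent) Fourier series and summing a geometric series in $k$ yields
\[
\sum_{k=0}^{N-1}\tau(k\alpha) - N\int_0^1 \tau(x)\,dx = \sum_{n\neq 0}\hat{T}(n)\,\frac{e^{2\pi i n N\alpha}-1}{e^{2\pi i n\alpha}-1},
\]
so it suffices to show that $\sum_{n\neq 0}|\hat{T}(n)|/\|n\alpha\|<\infty$ for almost every $\alpha$, where $\|\cdot\|$ denotes distance to the nearest integer.

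The first step is to prove the Fourier decay estimate $|\hat{T}(n)|=O(|n|^{-1-1/m})$. I would split the integral into boundary pieces $[0,1/|n|]$, $[B-1/|n|,B]$ and the interior piece $[1/|n|, B-1/|n|]$. On the boundary pieces, the growth condition \eqref{eq:growthcond} gives directly $\int_0^{1/|n|} c\,x^{1/m}\,dx = O(|n|^{-1-1/m})$. On the interior piece I would integrate by parts twice. Since $T$ is concave with $T(0)=0$, the inequality $T'(x)\leq T(x)/x$ combined with \eqref{eq:growthcond} yields $T'(1/|n|)=O(|n|^{1-1/m})$, and all resulting boundary terms are of order $|n|^{-1-1/m}$. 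For the remaining integral, concavity gives $\int_{1/|n|}^{B-1/|n|}|T''(x)|\,dx = T'(1/|n|) - T'(B-1/|n|) = O(|n|^{1-1/m})$, and dividing by $n^2$ again yields $O(|n|^{-1-1/m})$.

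The second step is to sum $n^{-1-1/m}/\|n\alpha\|$ over $n\geq 1$, grouped into continued-fraction blocks $q_l\leq n<q_{l+1}$, within which $n^{-1-1/m}\leq q_l^{-1-1/m}$. Using the Ostrowski representation $n=\sum_{k\leq l}b_k q_k$ to analyze $\|n\alpha\|$, the small denominators arise when $n$ is close to a multiple $bq_l$ with $1\leq b\leq a_{l+1}$ (giving $\|n\alpha\|\approx b/q_{l+1}$), while a recursive treatment of the residue $r=n-bq_l<q_l$ introduces the factor $\sum_{k=1}^{l}a_k$. One thus obtains the block estimate
\[
\sum_{q_l\leq n<q_{l+1}}\frac{1}{\|n\alpha\|}\leq C\,q_{l+1}\sum_{k=1}^{l+1}a_k.
\]
Combining with $q_{l+1}/q_l\leq a_{l+1}+1$ and summing over $l$ gives
\[
\sum_{n\geq 1}\frac{|\hat{T}(n)|}{\|n\alpha\|}\leq C'\sum_l\frac{a_{l+1}}{q_l^{1/m}}\sum_{k=1}^{l+1}a_k,
\]
which is finite for almost every $\alpha$ by Lemma \ref{lem:cfsum}.

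The main obstacle will be the block estimate in the second step. The naive bound $1/\|n\alpha\|\leq q_{l+1}$ applied to each $n$ in the block gives $q_{l+1}(q_{l+1}-q_l)$, which is far too weak. Extracting precisely the factor $\sum_{k=1}^{l+1}a_k$ that matches Lemma \ref{lem:cfsum} requires a careful iterated analysis of the Ostrowski expansion; this delicate arithmetic is where the Fourier decay of $T$ couples to the Diophantine properties of $\alpha$.
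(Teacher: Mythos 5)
Your proposal is correct, but it takes a genuinely different route from the paper's proof. The paper works directly with the Birkhoff sum: it expands $N$ in its Ostrowski representation (Lemma \ref{lem:altsum}), compares each block of length $q_l$ to a Riemann-type sum over the grid $k/q_l$, and controls the perturbations $\rho_{k,l}$ via the Koksma and Koksma--Hlawka inequalities together with the total-variation bound $V_I(\tau'_{q})=O(q^{1-1/m})$ of Lemma \ref{lem:boundedvar}. You instead pass to the Fourier side: your decay estimate $|\hat T(n)|=O(|n|^{-1-1/m})$ is the exact Fourier-analytic counterpart of Lemma \ref{lem:boundedvar} (same splitting at distance $1/|n|$ from the endpoints of the support, same use of concavity and \eqref{eq:growthcond} to bound $T'$ and $\int |T''|$ there), and the classical criterion $\sum_{n\neq 0}|\hat T(n)|/\norm{n\alpha}<\infty$ replaces the Ostrowski bookkeeping. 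Your block estimate is correct and is in fact obtainable by a standard separation argument rather than the digit-by-digit recursion you anticipate: for $1\le n<q_{l+1}$ the points $\{n\alpha\}$ are $1/(2q_{l+1})$-separated and lie at distance at least $1/(2q_{l+1})$ from $0$, so the whole sum up to $q_{l+1}$ is $O(q_{l+1}\log q_{l+1})=O(q_{l+1}\sum_{k=1}^{l+1}a_k)$, and together with $q_{l+1}/q_l\le a_{l+1}+1$ this lands exactly on the quantity in Lemma \ref{lem:cfsum}. What your approach buys: uniformity in $N$ is automatic, shift-invariance (Remark \ref{rem:shifts}) comes for free since $|\hat\tau(n)|$ is unchanged by translation, the reduction to support in $[0,1]$ is unnecessary, and the same argument with decay $O(n^{-2})$ recovers Proposition \ref{prop:hatfncbrf}. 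The one point you should make explicit is the second integration by parts: Definition \ref{def:domefnc} does not assume $T''$ continuous, so justify $\int_{1/|n|}^{B-1/|n|}|T''|\le T'(1/|n|)-T'(B-1/|n|)$ via monotonicity of $T'$ (concavity), which is the same fact the paper exploits in Lemma \ref{lem:boundedvar}.
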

\begin{remark}
 \label{rem:shifts}
 For sufficiently regular functions, including periodizations of hat and dome functions, the bounded remainder property is not affected by shifting the function (see \cite[p.\ 128--129]{schoissen}). It thus follows from Propositions \ref{prop:hatfncbrf} and \ref{prop:domefncbrf} that for almost every $\alpha \in \br \setminus \bq$, we have
 \begin{equation*}
  \left| \sum_{k=0}^{N-1} \tau (k\alpha +x_0) - N \int_0^1 \tau (x) \right| \leq C
 \end{equation*}
for all $N>0$ and every $x_0 \in \br$ whenever $\tau$ is the periodization of a hat or dome function. The constant $C$ may depend on $\tau$ and $\alpha$, but not on $N$ or $x_0$.
\end{remark}
Later on we explain how Theorems \ref{thm:mainpoly} and \ref{thm:mainconvex} follow from the results above. 

For the proof of Proposition \ref{prop:hatfncbrf}, we will need the following lemma.
\begin{lemma}
\label{lem:altsum}
 Let $f \, : \, \br \mapsto \br$ be a $1$-periodic function, $\alpha$ be irrational and $N$ be a nonnegative integer with Ostrowski expansion 
 \begin{equation*}
 N= b_s q_s + \ldots + b_0 q_0
 \end{equation*}
to base $\alpha$. We then have
 \begin{equation}
 \label{eq:altsum}
 \sum_{k=0}^{N-1} f(k\alpha) = \sum_{l=0}^{s} \sum_{b=0}^{b_l-1} \sum_{k=0}^{q_l-1} f \left( \frac{k}{q_l} + \frac{\rho_{k,l}}{q_l} \right),
 \end{equation}
for some $\rho_{k,l}$ satisfying $-1<\rho_{k,l}<2$.
\end{lemma}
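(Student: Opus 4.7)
The plan is to partition $\{0, 1, \ldots, N-1\}$ into consecutive blocks of lengths $q_l$ dictated by the Ostrowski expansion of $N$, and then exploit within each block the fact that $p_l/q_l$ is an excellent rational approximation of $\alpha$.

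For the setup, define $M_{l,b} := \sum_{l' > l} b_{l'} q_{l'} + b\, q_l$ for $l \in \{0, 1, \ldots, s\}$ and $b \in \{0, 1, \ldots, b_l - 1\}$. The intervals $[M_{l,b}, M_{l,b}+q_l - 1]$ are then pairwise disjoint and cover $\{0, 1, \ldots, N-1\}$, giving
$$\sum_{k=0}^{N-1} f(k\alpha) \;=\; \sum_{l=0}^{s}\sum_{b=0}^{b_l - 1}\sum_{k=0}^{q_l - 1} f\bigl((M_{l,b}+k)\alpha\bigr).$$

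The core work is to analyse a single block. By \eqref{eq:cferror} one may write $k\alpha = kp_l/q_l + \eta_{k,l}/q_l$ with $|\eta_{k,l}| < 1$ for all $0 \leq k < q_l$. Separately, split the shift: $\{M_{l,b}\alpha\} = n_{l,b}/q_l + \gamma_{l,b}/q_l$, where $n_{l,b} \in \{0, \ldots, q_l - 1\}$ and $\gamma_{l,b} \in [0,1)$ are the integer and fractional parts of $q_l\{M_{l,b}\alpha\}$. Then, modulo $1$,
$$(M_{l,b}+k)\alpha \;\equiv\; \frac{(n_{l,b}+kp_l)\bmod q_l}{q_l} \;+\; \frac{\gamma_{l,b}+\eta_{k,l}}{q_l}.$$
By \eqref{eq:cfminusone} we have $\gcd(p_l, q_l) = 1$, so $k \mapsto j := (n_{l,b} + kp_l)\bmod q_l$ is a bijection on $\{0, \ldots, q_l - 1\}$. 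Re-indexing the inner sum by $j$ and invoking the $1$-periodicity of $f$ turns the block sum into $\sum_{j=0}^{q_l-1} f(j/q_l + \rho_{j,l}/q_l)$ with $\rho_{j,l} := \gamma_{l,b} + \eta_{k(j),l} \in [0, 1) + (-1, 1) = (-1, 2)$, where $k(j)$ denotes the preimage of $j$. The notation $\rho_{k,l}$ in the lemma tacitly absorbs the implicit dependence on $b$.

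The only real subtlety is the careful bookkeeping of the mod-$1$ reduction and confirming the stated range $(-1, 2)$ for $\rho_{j,l}$, which is wider than a naive $(-1, 1)$ precisely because the shift $\gamma_{l,b}$ and the perturbation $\eta_{k,l}$ combine additively with no guaranteed cancellation. Everything else is a routine manipulation of sums together with the standard consequence $\gcd(p_l,q_l)=1$ of the convergent recursion.
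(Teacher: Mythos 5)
Your proof is correct and follows essentially the same route as the paper's: partition $\{0,\ldots,N-1\}$ into Ostrowski blocks, write $k\alpha = kp_l/q_l + \eta_{k,l}/q_l$ and split the block shift into $n_{l,b}/q_l + \gamma_{l,b}/q_l$, then re-index the inner sum via the bijection $k \mapsto (n_{l,b}+kp_l) \bmod q_l$. The only (immaterial) differences are that you order the blocks from high to low Ostrowski digit and invoke $\gcd(p_l,q_l)=1$ abstractly, whereas the paper inverts $p_l$ explicitly as $(-1)^{l-1}q_{l-1} \bmod q_l$ to obtain the closed formula \eqref{eq:rho} for $\rho_{k,l}$, which it reuses in later proofs.
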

\begin{proof}
Let $n(0)=0$ and $n(l)=b_{l-1} q_{l-1} + \ldots +b_0q_0$ for $1\leq l\leq s$. It is straightforward to show that
\begin{equation}
\label{eq:ostrowskisum}
 \sum_{k=0}^{N-1} f(k\alpha) = \sum_{l=0}^{s} \sum_{b=0}^{b_l-1} \sum_{k=0}^{q_l-1} f(k\alpha +(n(l)+bq_l)\alpha).
\end{equation}
We define $\theta_l$ from the equation
\begin{equation*}
 \frac{\theta_l}{a_{l+1}q_l^2} = \alpha -\frac{p_l}{q_l} ,
\end{equation*}
and observe that by \eqref{eq:cferror} we have $1/3\leq|\theta_l|\leq1$. Moreover, we find $x_l\in [0,1)$ and $m_l\in \{ 0, \ldots , q_l-1 \}$, $m_l = m_l(b,x,\alpha)$, such that
\begin{equation*}
 \left\{(n(l)+bq_l)\alpha \right\} = \frac{m_l}{q_l} + \frac{x_l}{q_l}.
\end{equation*}
We can then rewrite the summand on the right hand side in \eqref{eq:ostrowskisum} as 
\begin{equation}
\label{eq:rewrittensummand}
 f(k\alpha +(n(l)+bq_l)\alpha) = f\left(\frac{k p_l+m_l}{q_l} + \frac{k\theta_l}{a_{l+1}q_l^2}+ \frac{x_l}{q_l} \right) .
\end{equation}
Using the substitution $kp_l + m_l = t \, (\modu q_l)$, which by \eqref{eq:cfminusone} gives 
\begin{equation*}
 k=(t-m_l)q_{l-1}(-1)^{l-1} \, (\modu q_l), 
\end{equation*}
we get 
\begin{equation}
\label{eq:rewrittenfill}
 \left\{ \frac{kp_l+m_l}{q_l} + \frac{k\theta_l}{a_{l+1}q_l^2}+\frac{x_l}{q_l} \right\} = \left\{ \frac{t}{q_l} + \frac{\rho_{t,l}}{q_l} \right\},
\end{equation}
where
\begin{equation}
 \label{eq:rho}
 \rho_{t,l} := \left\{ (t-m_l)(-1)^{l-1}\frac{q_{l-1}}{q_l} \right\} \frac{\theta_l}{a_{l+1}} + x_l .
\end{equation}
With this definition we have
\begin{equation*}
 - \frac{1}{a_{l+1}} < \rho_{t,l} < \frac{1}{a_{l+1}}+1 ,
\end{equation*}
and hence $-1 < \rho_{t,l} < 2$. Combining \eqref{eq:ostrowskisum}, \eqref{eq:rewrittensummand} and \eqref{eq:rewrittenfill}, we thus arrive at \eqref{eq:altsum}.
\end{proof}

\begin{proof}[Proof of Proposition \ref{prop:hatfncbrf}]
It will be sufficient to prove Proposition \ref{prop:hatfncbrf} for the case when $b \leq 1$ in Definition \ref{def:hatfnc}. To see this, observe that any general hat function $T$ can be written as a sum of shifted hat functions $T_i$ with support $[0,b]$, $b \leq 1$. Since any finite sum of bounded remainder functions is again a bounded remainder function, the general case follows from the special case $\tau(x) = \sum_{m \in \bz} T_i(x+m)$.
 
Our goal is to show that for almost every $\alpha \in \br \setminus \bq$, we can find a constant $C=C(\alpha, \tau)$ such that 
\begin{equation}
\label{eq:brhatgoal}
 \left| \sum_{k=0}^{N-1} \tau (k\alpha) - N \int_0^1 \tau (x) \, dx \right| \leq C 
\end{equation}
for every integer $N>0$. It will be enough to verify this for $\alpha \in (0,1)$, as the sum in \eqref{eq:brhatgoal} depends only on the fractional part of $\alpha$. By Lemma \ref{lem:altsum} we may rewrite this sum as 
\begin{equation*}
\sum_{l=0}^{s} \sum_{b=0}^{b_l-1} \sum_{k=0}^{q_l-1} \tau \left( \frac{k}{q_l} + \frac{\rho_{k,l}}{q_l} \right),
\end{equation*}
where $N=b_sq_s + \cdots + b_0q_0$ is the Ostrowski expansion of $N$ to base $\alpha$ and $-1 <\rho_{k,l} < 2$. We verify \eqref{eq:brhatgoal} in two steps: First we show that 
\begin{equation}
 \label{eq:hatstep1}
 \left| \sum_{l=0}^{s} \sum_{b=0}^{b_l-1} \sum_{k=0}^{q_l-1} \tau \left( \frac{k}{q_l} \right) - N \int_0^1 \tau (x) \, dx \right| \leq C , \quad N=1,2, \ldots ,
\end{equation}
for almost every irrational $\alpha \in (0,1)$. We then show that 
\begin{equation}
 \label{eq:hatstep2}
 \left| \sum_{l=0}^{s} \sum_{b=0}^{b_l-1} \sum_{k=0}^{q_l-1} \left( \tau \left( \frac{k}{q_l} + \frac{\rho_{k,l}}{q_l} \right)- \tau \left( \frac{k}{q_l}\right) \right) \right| \leq C , \quad s=1,2, \ldots , 
\end{equation}
for almost every irrational $\alpha \in (0,1)$. Combining \eqref{eq:hatstep1} and \eqref{eq:hatstep2}, we immediately obtain \eqref{eq:brhatgoal}.

Let us first verify that \eqref{eq:hatstep1} holds. On the interval $I$, the function $\tau$ is of the form \eqref{eq:hatfnc} with $b \leq 1$, so we can find $u_l, v_l \in \{ 0,1, \ldots , q_l-1 \}$ and $\xi_l , \eta_l \in (0,1]$ such that
\begin{equation}
\label{eq:aandb}
 a=\frac{u_l + \xi_l}{q_l} \quad \text{ and } \quad b=\frac{v_l+\eta_l}{q_l} .
\end{equation}
For sufficiently large $l>l_0$ (where $l_0=l_0(\tau)$ depends only on $\tau$), we have $u_l < v_l$, and a straightforward calculation gives
\begin{equation*}
 \sum_{k=0}^{q_l-1} \tau \left( \frac{k}{q_l} \right) = \frac{Hb}{2} q_l + \frac{Ha\eta_l(1-\eta_l)-Hb\xi_l(1-\xi_l)}{2a(b-a)q_l}.
\end{equation*}
Thus we have  
\begin{equation*}
\left| \sum_{k=0}^{q_l-1} \tau \left( \frac{k}{q_l} \right) - q_l \int_0^1 \tau(x) \, dx \right| \leq C \frac{1}{q_l},
\end{equation*}
where $C=C(\tau)$ (this is trivially true also when $l \leq l_0$), and it follows that
\begin{equation*}
\left| \sum_{l=0}^{s} \sum_{b=0}^{b_l-1} \sum_{k=0}^{q_l-1} \tau \left( \frac{k}{q_l} \right) - N \int_0^1 \tau (x) \, dx \right| \leq C \sum_{l=0}^s \frac{b_l}{q_l} .
\end{equation*}
Since $b_l < a_{l+1}$, it follows from Lemma \ref{lem:cfsum} that the right hand side above is uniformly bounded in $s$ for almost every $\alpha \in (0,1)$. This confirms \eqref{eq:hatstep1}.

We go on to verify \eqref{eq:hatstep2}. We will assume below that $b < 1$ in \eqref{eq:hatfnc}; the proof when $b=1$ is slightly simpler, but essentially the same. Let $Q_l = \{ 0,1, \ldots , q_l-1\}$, and define $u_l, v_l \in Q_l$ as in \eqref{eq:aandb}. Denote by $E$ a set of ``exceptional'' indices
\begin{equation*}
E = \{ 0,u_l-1, u_l, u_l+1, v_l-1, v_l, v_l+1, q_l-1\}
\end{equation*}
(for sufficiently large $l \geq l_0$, these are all distinct). We have
\begin{equation}
\label{eq:indexsplit}
\sum_{k=0}^{q_l -1} \tau \left( \frac{k}{q_l} + \frac{\rho_{k,l}}{q_l} \right) = \sum_{k \in Q_l \setminus E}  \tau \left( \frac{k}{q_l} + \frac{\rho_{k,l}}{q_l} \right) + \sum_{k \in E}  \tau \left( \frac{k}{q_l} + \frac{\rho_{k,l}}{q_l} \right),
\end{equation}
and since $\tau$ is everywhere linear (with bounded slope) it is clear that 
\begin{equation}
\label{eq:indexsplit1}
\sum_{k \in E} \tau \left( \frac{k}{q_l} + \frac{\rho_{k,l}}{q_l} \right) = \sum_{k \in E} \tau \left( \frac{k}{q_l} \right) + O \left( \frac{1}{q_l} \right).
\end{equation}
The second sum on the right hand side in \eqref{eq:indexsplit} can be rewritten using the specific form \eqref{eq:hatfnc} of $\tau$ on $I$. We get
\begin{equation}
\label{eq:indexsplit2}
\sum_{k \in Q_l \setminus E}  \tau \left( \frac{k}{q_l} + \frac{\rho_{k,l}}{q_l} \right) = \sum_{k \in Q_l \setminus E}  \tau \left( \frac{k}{q_l} \right) + \Sigma_1 ,
\end{equation}
where 
\begin{equation*}
\Sigma_1 := \frac{1}{q_l} \left( \frac{H}{a} \sum_{k=1}^{u_l-2} \rho_{k,l} - \frac{H}{b-a} \sum_{k=u_l+2}^{v_l-2} \rho_{k,l} \right) ,
\end{equation*}
and $\rho_{k,l}$ is defined in \eqref{eq:rho}.
To verify \eqref{eq:hatstep2}, we will need to find an appropriate bound on $\Sigma_1$. 

We now show that $\Sigma_1 = O( \sum_{i=1}^l a_i /q_l)$. By defining $\alpha_l$ and $\gamma_l$ as
\begin{equation}
\label{eq:alphgam}
\begin{aligned}
  \alpha_l &:= (-1)^{l-1} \frac{q_{l-1}}{q_l} , \\
  \gamma_l &:= -m_l(-1)^{l-1}\frac{q_{l-1}}{q_l} ,
 \end{aligned}
\end{equation}
we can rewrite $\rho_{k,l}$ in \eqref{eq:rho} as
\begin{equation*}
 \rho_{k,l} = \omega_{k,l} \cdot \frac{\theta_l}{a_{l+1}} + x_l ,
\end{equation*}
where $\omega_{k,l} := \{ k\alpha_l + \gamma_l \}$. Using \eqref{eq:aandb} and the fact that $x_l \in [0,1)$, it is an easy task to show that
\begin{equation*}
 \frac{H}{a} \sum_{k=1}^{u_l-2} x_l - \frac{H}{b-a} \sum_{k=u_l+2}^{v_l-2} x_l = O(1) .
\end{equation*}
We thus have
\begin{equation}
\label{eq:S1calc1}
\begin{aligned}
 \Sigma_1 &= \frac{\theta_l}{q_l a_{l+1}} \left( \frac{H}{a} \sum_{k=1}^{u_l-2} \omega_{k,l} - \frac{H}{b-a} \sum_{k=u_l+2}^{v_l-2} \omega_{k,l} \right) + O \left( \frac{1}{q_l} \right) \\
&= \frac{H\theta_l}{q_l a_{l+1}} \left( \frac{1}{a} \sum_{k=0}^{u_l-1} \omega_{k,l} - \frac{1}{b-a} \sum_{k=u_l}^{v_l-1} \omega_{k,l} \right) + O \left( \frac{1}{q_l} \right) ,\\
 \end{aligned}
 \end{equation}
where the last equality follows from boundedness of the terms $\omega_{k,l}$. 

To further approximate $\Sigma_1$, we employ Koksma's inequality for the sequence $\{\omega_{k,l} \}_{k=0}^{q_l-1}$ and the linear function $f(x) = \{ x\}$ (see \cite[Theorem 5.1]{kuipers}). For $1 \leq N \leq q_l$, we have
\begin{equation*}
 \left| \sum_{k=0}^{N-1} \omega_{k,l} - N \int_0^1 x \, dx \right| =  \left| \sum_{k=0}^{N-1} \omega_{k,l} - \frac{N}{2} \right| \leq ND_N^*(\omega_{k,l}) V_I(f),
\end{equation*}
where $V_I(f) =1$ is the total variation of $f$ over $I$, and $D_N^*(\omega_{k,l})$ denotes the star-discrepancy of the point set $\{\omega_{k,l} \}_{k=0}^{N-1}$.
The extreme discrepancy $D_N$ of $\{ \omega_{k,l} \}_{k=0}^{N-1}$ equals that of $\{ k\alpha_l\}_{k=0}^{N-1}$. Note that $|\alpha_l| = q_{l-1}/q_l$ has continued fraction expansion 
\begin{equation*}
 |\alpha_l | = \left[ 0; a_l, a_{l-1}, \ldots a_1 \right].
\end{equation*}
It thus follows that 
\begin{equation}
\label{eq:boundomega2}
 ND_N^* (\omega_{k,l}) \leq ND_N(\omega_{k,l}) = ND_N(k\alpha_l) \leq 1 + 2 \sum_{i=1}^l a_i
\end{equation}
for $1 \leq N \leq q_l$ (see \cite[p.\ 126]{kuipers} for the last inequality). Hence, we have 
\begin{equation*}
 \left| \sum_{k=0}^{N-1} \omega_{k,l} - \frac{N}{2} \right| \leq 1 + 2 \sum_{i=1}^l a_i ,
\end{equation*}
and from this and \eqref{eq:S1calc1} it follows that
\begin{equation*}
 \begin{aligned}
  \Sigma_1 &= \frac{H \theta_l}{q_l a_{l+1}} \left( \left( \frac{1}{a} + \frac{1}{b-a} \right) \sum_{k=0}^{u_l-1} \omega_{k,l} - \frac{1}{b-a} \sum_{k=0}^{v_l-1} \omega_{k,l} \right) + O \left( \frac{1}{q_l} \right) \\
  &= \frac{H\theta_l}{q_l a_{l+1}} \left( \frac{b}{a(b-a)} \cdot \frac{u_l}{2} - \frac{1}{b-a} \cdot \frac{v_l}{2} \right) + O\left( \frac{\sum_{i=1}^l a_i}{q_l} \right) \\
  &= \frac{H\theta_l}{2q_l a_{l+1}} \left( \frac{b}{a(b-a)} \left( q_l a-\xi_l\right) - \frac{1}{b-a} \left( q_lb-\eta_l\right) \right) + O\left( \frac{\sum_{i=1}^l a_i}{q_l} \right) \\
  &= O\left( \frac{\sum_{i=1}^l a_i}{q_l} \right) .
 \end{aligned}
\end{equation*}

Let us finally see that this bound on $\Sigma_1$ implies \eqref{eq:hatstep2}. Inserting \eqref{eq:indexsplit1} and \eqref{eq:indexsplit2} in \eqref{eq:indexsplit}, we get
\begin{equation*}
 \left| \sum_{k=0}^{q_l-1} \tau \left( \frac{k}{q_l} + \frac{\rho_{k,l}}{q_l} \right) - \sum_{k=0}^{q_l-1} \tau \left( \frac{k}{q_l} \right)\right| \leq \frac{C}{q_l} \sum_{i=1}^l a_i ,
\end{equation*}
for $l \geq l_0=l_0(\tau)$ and some constant $C$ which depends only on $\tau$ and $\alpha$ (this bound holds trivially also when $l < l_0$). We thus have
\begin{equation*}
\begin{aligned}
  \left| \sum_{l=0}^{s} \sum_{b=0}^{b_l-1} \sum_{k=0}^{q_l-1} \left( \tau \left( \frac{k}{q_l} + \frac{\rho_{k,l}}{q_l} \right)- \tau \left( \frac{k}{q_l}\right) \right) \right| &\leq C' a_1 + C \sum_{l=1}^s \frac{b_l}{q_l} \sum_{i=1}^l a_i \\
  &\leq C \sum_{l=0}^s \frac{a_{l+1}}{q_l} \sum_{i=1}^{l+1} a_i.
\end{aligned}
\end{equation*}
By Lemma \ref{lem:cfsum}, the sum on the right hand side above is bounded uniformly in $s$ for almost every irrational $\alpha \in (0,1)$. This verifies \eqref{eq:hatstep2}, and completes the proof of Proposition \ref{prop:hatfncbrf}.
\end{proof}

Before we embark on the proof of Proposition \ref{prop:domefncbrf}, we establish the following preliminary result.
\begin{lemma}
 \label{lem:boundedvar}
 Suppose $f : \br \mapsto \br$ is a dome function as given in Definition \ref{def:domefnc}, and let $q>2/B$. 
 Denote by $f_q'$ the function 
 \begin{equation}
 \label{eq:derivq}
  f_q'(x) = \begin{cases}
          f'(x) & 1/q \leq x \leq B-1/q , \\
          0 & \text{ otherwise. }
         \end{cases}
 \end{equation}
 Then for $q>1/\varepsilon$, with $\varepsilon$ as in \eqref{eq:growthcond}, the total variation $V_I(f_q')$ of $f_q'$ over $I$ satisfies
 \begin{equation*}
  V_I(f_q') \leq Cq^{1-1/m} ,
 \end{equation*}
where $C=C(c)$ with $c$ as in \eqref{eq:growthcond}.
\end{lemma}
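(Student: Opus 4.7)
Because $f$ is concave on $(0,B)$, its derivative $f'$ is monotonically decreasing there. Moreover, $f\geq 0$ with $f(0)=f(B)=0$ (both forced by the growth condition at $x=0$), so $f$ increases from $0$, reaches a maximum at some interior point $x_\ast\in(0,B)$, and decreases back to $0$. Hence for all sufficiently large $q$ (namely once $1/q<x_\ast$ and $B-1/q>x_\ast$) one has $f'(1/q)\geq 0$ and $f'(B-1/q)\leq 0$. The plan is to read off the total variation of $f_q'$ from this monotonicity picture and then bound the two extremal values of $f'$ via the growth condition.

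\textbf{Computing the variation.} Since $f'$ is monotone decreasing on $[1/q,B-1/q]$, its variation there is simply $f'(1/q)-f'(B-1/q)$. Extending $f'$ by zero outside this interval, as in \eqref{eq:derivq}, introduces jumps of sizes $|f'(1/q)|=f'(1/q)$ at the left truncation point and $|f'(B-1/q)|=-f'(B-1/q)$ at the right. Collecting,
\[
 V_I(f_q') \;=\; 2f'(1/q) - 2f'(B-1/q).
\]

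\textbf{Key estimate on $f'$ near the endpoints.} I would bound $f'(1/q)$ by combining concavity with the growth hypothesis \eqref{eq:growthcond}. Since $f'$ is decreasing and $f(0)=0$,
\[
 f(x)\;=\;\int_0^x f'(t)\,dt \;\geq\; x\,f'(x),
\]
so $f'(x)\leq f(x)/x$. For $0<x<\varepsilon$ the growth bound then gives $f'(x)\leq c\,x^{1/m-1}$, and evaluating at $x=1/q$ (allowed since $q>1/\varepsilon$) yields $f'(1/q)\leq c\,q^{1-1/m}$. For the right endpoint I would apply the identical argument to the reflected function $g(x):=f(B-x)$, which is again concave on $(0,B)$, vanishes at $0$, and satisfies $|g(x)|\leq c\,x^{1/m}$ for small $x$ by the second half of \eqref{eq:growthcond}. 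This gives $g'(1/q)\leq c\,q^{1-1/m}$, i.e.\ $-f'(B-1/q)\leq c\,q^{1-1/m}$.

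\textbf{Conclusion.} Combining the two bounds with the formula for $V_I(f_q')$ gives $V_I(f_q')\leq 4c\,q^{1-1/m}$, so one may take $C=4c$. This is a short, essentially mechanical argument: there is no serious obstacle. The only subtlety is ensuring the sign pattern of $f'$ at the truncation points so that the boundary jumps add (rather than cancel) with the interior variation; this is handled by taking $q$ large enough that both $1/q$ and $B-1/q$ lie on the correct side of $x_\ast$, which is automatic from the standing hypothesis $q>1/\varepsilon$ after shrinking $\varepsilon$ if necessary (equivalently, by absorbing finitely many small $q$ into the constant $C$).
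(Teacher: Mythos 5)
Your proposal is correct and follows essentially the same route as the paper: both use monotonicity of $f'$ (from concavity) to reduce $V_I(f_q')$ to $2\bigl(f'(1/q)-f'(B-1/q)\bigr)$, and both bound $f'(1/q)$ by the chord slope $qf(1/q)$ (your integral inequality $f(x)\geq xf'(x)$ is the same estimate) before invoking \eqref{eq:growthcond}, arriving at the same constant $C=4c$. Your extra care about the signs of $f'$ at the truncation points is a harmless refinement of what the paper leaves implicit.
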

\begin{proof}
 The function $f$ is concave and twice differentiable on $(0,B)$, from which it follows that $f'$ is monotonically nonincreasing and
 \begin{equation*}
  V_I(f_q') = 2 \left( f'\left( \frac{1}{q} \right) - f'\left( B-\frac{1}{q} \right) \right).
 \end{equation*}
Moreover, we have that 
\begin{equation*}
 f' \left( \frac{1}{q} \right) \leq \frac{f\left( 1/q\right)-f(0)}{1/q} = qf\left(\frac{1}{q}\right) , 
\end{equation*}
and likewise 
\begin{equation*}
f'\left(B-\frac{1}{q} \right) \geq -qf\left(B-\frac{1}{q} \right).
\end{equation*}
By the conditions \eqref{eq:growthcond} on $f$ it thus follows that 
\begin{equation*}
 V_I(f_q') \leq 4cq^{1-1/m} \quad \text{ for all } q> \frac{1}{\varepsilon} .
\end{equation*}
\end{proof}

\begin{proof}[Proof of Proposition \ref{prop:domefncbrf}]
It will be sufficient to prove Proposition \ref{prop:domefncbrf} for the case when $B \leq 1$ in Definition \ref{def:domefnc}. To see this, observe that any general dome function $T$ can be written as a sum of shifted hat functions, and shifted dome functions with support in $I$. This is illustrated for the case $1 < B \leq 2$ in Figure \ref{fig:domedecomp}; we may write the function $T$ as
\begin{equation*}
 T= T_1 + T_2 + T_3 ,
\end{equation*}
where $T_1$ is the hat function in \eqref{eq:hatfnc} with $a=1, b=B$ and $H=T(1)$, and $T_2$ and $T_3$ are the dome functions $T_2 = \chi_{[0,1]}\cdot (T-T_1)$ and $T_3=\chi_{[1,B]}\cdot (T-T_1)$. As the sum of finitely many bounded remainder functions is again a bounded remainder function, the general case follows from the special case $T=T_3$ and Proposition \ref{prop:hatfncbrf}.
In other words, it is sufficient to consider the case when, restricted to the unit interval, $\tau$ is simply a dome function with support $[0,B]$, $B \leq 1$. 
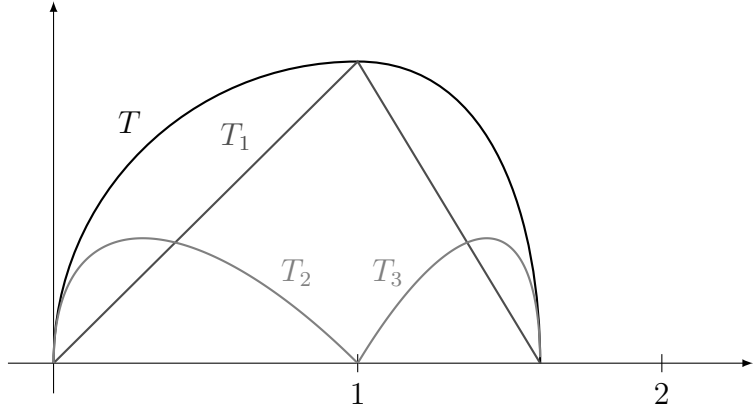
\begin{figure}
 \centering
 \begin{tikzpicture}[scale=4]
%axes
\draw[-latex] (-.15,0) -- (2.3, 0);
\draw[-latex] (0,-.1) -- (0, 1.2);
\draw (1,-.03) -- (1, .03);
\draw(1,-.03) node[below]{$1$};
\draw (2,-.03) -- (2, .03);
\draw(2,-.03) node[below]{$2$};
% graph of T and decomp
\draw[black, thick] (0,0) to [out=90, in=180] (1,1) to [out=0, in=90] (1.6,0);
\draw(.25, .8) node[black]{$T$};
\draw[{black!70!white}, thick] (0,0) -- (1,1) -- (1.6,0);
\draw(.6,.75) node[{black!70!white}]{$T_1$};
\draw[{black!50!white}, thick, domain=0:1, samples=500] plot (\x, {sqrt(1-(\x-1)^2)-\x});
\draw(.8,.3) node[{black!50!white}]{$T_2$};
\draw[{black!50!white}, thick, domain=1:1.6, samples=500] plot (\x, {sqrt(1-((\x-1)/.6)^2)+(5/3)*(\x-1.6)});
\draw(1.1,.3) node[{black!50!white}]{$T_3$};
\end{tikzpicture}
 \caption{The dome function $T$ decomposed as the sum of a hat function $T_1$, and two dome functions $T_2$ and $T_3$ supported on intervals of length at most one.\label{fig:domedecomp}}
\end{figure}

Let $\tau$ be such a function. We want to show that for almost every $\alpha \in \br \setminus \bq$, we can find a constant $C=C(c,m,\alpha)$ such that 
 \begin{equation}
  \label{eq:brfbowler}
   \left| \sum_{k=0}^{N-1} \tau(k \alpha) - N \int_0^1 \tau(x) \, dx \right| \leq C 
 \end{equation}
 for every integer $N>0$. Again it will be enough to verify this for $\alpha \in (0,1)$, as the sum in \eqref{eq:brfbowler} depends only on the fractional part of $\alpha$. By Lemma \ref{lem:altsum}, we may rewrite this sum as 
\begin{equation*}
 \sum_{l=0}^{s} \sum_{b=0}^{b_l-1} \sum_{k=0}^{q_l-1} \tau \left( \frac{k}{q_l} + \frac{\rho_{k,l}}{q_l} \right),
\end{equation*}
where $N=b_sq_s + \cdots + b_0q_0$ is the Ostrowski expansion of $N$ to base $\alpha$ and $-1<\rho_{k,l}<2$. We verify \eqref{eq:brfbowler} in two steps:  First we show that 
\begin{equation}
 \label{eq:bowler1}
 \left| \sum_{l=0}^{s} \sum_{b=0}^{b_l-1} \sum_{k=0}^{q_l-1} \tau \left( \frac{k}{q_l} \right) - N \int_0^1 \tau (x) \, dx \right| \leq C , \quad N=1,2, \ldots ,
\end{equation}
for almost every irrational $\alpha \in (0,1)$. We then show that 
\begin{equation}
 \label{eq:bowler2}
 \left| \sum_{l=0}^{s} \sum_{b=0}^{b_l-1} \sum_{k=0}^{q_l-1} \left( \tau \left( \frac{k}{q_l} + \frac{\rho_{k,l}}{q_l} \right)- \tau \left( \frac{k}{q_l}\right) \right) \right| \leq C , \quad s=1,2, \ldots , 
\end{equation}
for almost every irrational $\alpha \in (0,1)$. 
Combining \eqref{eq:bowler1} and \eqref{eq:bowler2}, we immediately obtain \eqref{eq:brfbowler}.

Let us first see that \eqref{eq:bowler1} holds. On the interval $I$, the function $\tau$ is supported on $[0,B]$ with $0<B\leq1$, so
we can find $u_l \in \{ 0, 1, \ldots , q_l-1\}$ and $\xi_l \in (0,1]$ such that 
\begin{equation}
\label{eq:ulxil}
B = \frac{u_l + \xi_l}{q_l}.
\end{equation}
We begin by considering the inner sum
\begin{equation*}
\sum_{k=0}^{q_l-1} \tau \left( \frac{k}{q_l} \right) = \sum_{k=1}^{u_l-1} \tau \left( \frac{k}{q_l} \right) + \tau \left( \frac{u_l}{q_l} \right) .
\end{equation*}
It is not difficult to show, for instance using integration by parts, that 
\begin{equation*}
\begin{aligned}
\sum_{k=1}^{u_l-1} \tau \left( \frac{k}{q_l} \right) &= q_l \int_{1/q_l}^{(u_l-1)/q_l} \tau(x) \, dx + \frac{1}{2} \left( \tau \left( \frac{1}{q_l}\right)+ \tau \left( \frac{u_l-1}{q_l}\right) \right)\\
&+ \int_{1/q_l}^{(u_l-1)/q_l} \left( \{ q_lx \} - \frac{1}{2} \right) \tau'(x) \, dx , 
\end{aligned}
\end{equation*}
and hence 
\begin{equation*}
\begin{aligned}
\sum_{k=0}^{q_l-1} \tau \left( \frac{k}{q_l} \right) - q_l\int_0^1 \tau(x) \, dx &= \tau \left( \frac{u_l}{q_l} \right) + \frac{1}{2} \left( \tau \left( \frac{1}{q_l}\right) + \tau \left( \frac{u_l-1}{q_l} \right)\right) \\
&- q_l \left( \int_0^{1/q_l} \tau(x) \, dx + \int_{(u_l-1)/q_l}^B \tau(x) \, dx \right)\\
&+  \int_{1/q_l}^{(u_l-1)/q_l} \left( \{ q_lx\} - \frac{1}{2} \right) \tau'(x) \, dx .
\end{aligned}
\end{equation*}
Now let $l > l_0=l_0(\tau)$ be sufficiently large for $q_l >2/\varepsilon$.  
It is then clear from the conditions \eqref{eq:growthcond} on $\tau$ that all but the last term on the right hand side above are bounded by $C q_l^{-1/m}$ in absolute value (where $C=C(c,m)$). In fact, the same bound holds also for the last term, as 
\begin{equation*}
\begin{aligned}
&\left| \int_{1/q_l}^{(u_l-1)/q_l} \left( \{q_l x\} - \frac{1}{2} \right) \tau'(x) \, dx \right| \\
&\leq \sum_{i=1}^{u_l-2} \left| \int_{i/q_l}^{(i+1)/q_l} \left( \{q_l x\} - \frac{1}{2} \right) \tau'(x) \, dx \right| \\
&\leq \sum_{i=1}^{u_l-2} \Big| \max_{x \in [\frac{i}{q_l}, \frac{i+1}{q_l}]} \tau'(x) - \min_{x \in [\frac{i}{q_l}, \frac{i+1}{q_l}]} \tau'(x) \Big| \int_{(2i+1)/2q_l}^{(i+1)/q_l} \left( \{ q_lx\} - \frac{1}{2} \right) \, dx \\
&\leq \frac{1}{8q_l} V_I(\tau'_{q_l}) ,
\end{aligned}
\end{equation*}
with $\tau'_{q_l}$ defined as in \eqref{eq:derivq}. Since $q_l> 2/\varepsilon$, it follows from Lemma \ref{lem:boundedvar} that \begin{equation*}
\frac{1}{8q_l} V_I(\tau'_{q_l}) \leq C q_l^{-1/m},
\end{equation*}
where $C=C(c)$, and hence we get
\begin{equation*}
\left| \sum_{k=0}^{q_l-1} \tau \left( \frac{k}{q_l} \right) - q_l\int_0^1 \tau(x) \, dx \right| \leq C q_l^{-1/m}, 
\end{equation*} 
for some constant $C(c, m)$ and $l>l_0$ (and this bound holds trivially also when $l \leq l_0$). 
It follows that 
\begin{equation*}
 \left| \sum_{l=0}^{s} \sum_{b=0}^{b_l-1} \sum_{k=0}^{q_l-1} \tau \left( \frac{k}{q_l} \right) - N \int_0^1 \tau (x) \, dx \right| \leq C \sum_{l=0}^s \frac{b_l}{q_l^{1/m}},
\end{equation*}
and by Lemma \ref{lem:cfsum} the latter sum is uniformly bounded in $s$ for almost every irrational $\alpha \in (0,1)$. This confirms \eqref{eq:bowler1}.

We now show that \eqref{eq:bowler2} holds. We assume below that $B < 1$; the proof when $B=1$ is slightly simpler, but essentially the same. Again we begin by treating the inner sum
\begin{equation}
\label{eq:b2inner}
\sum_{k=0}^{q_l-1} \left( \tau \left( \frac{k}{q_l} + \frac{\rho_{k,l}}{q_l} \right) - \tau \left( \frac{k}{q_l} \right) \right),
\end{equation}
which we will show is bounded in absolute value by
\begin{equation}
\label{eq:wantedbound}
\sum_{i=1}^l a_i \left( C_1q_l^{-1} + C_2q_l^{-1/m} \right),  
\end{equation}
for constants $C_1=C_1(m,c,\alpha)$ and $C_2=C_2(m,c,\alpha)$. 

Let $u_l$ be defined as in \eqref{eq:ulxil}, and denote by $E$ a set of ``exceptional'' indices
\begin{equation*}
E= \left\{ 0,1,u_l-2, u_l-1, u_l, u_l+1, q_l-1 \right\} 
\end{equation*} 
(for sufficiently large $l$, these are all distinct). We split the sum \eqref{eq:b2inner} into two parts
\begin{equation*}
\Sigma_1 := \sum_{k \in E} \left( \tau \left( \frac{k}{q_l} + \frac{\rho_{k,l}}{q_l} \right) - \tau \left( \frac{k}{q_l} \right) \right)
\end{equation*}
and
\begin{equation*}
\Sigma_2 := \sum_{k=2}^{u_l-3} \left( \tau \left( \frac{k}{q_l} + \frac{\rho_{k,l}}{q_l} \right) - \tau \left( \frac{k}{q_l} \right) \right) .
\end{equation*}
Now let $l>l_1$ be sufficiently large for $q_l > 4/\varepsilon$. Since $-1 < \rho_{k,l} < 2$, it follows from the conditions \eqref{eq:growthcond} on $\tau$ that 
\begin{equation}
\label{eq:S1bound}
\left| \Sigma_1 \right| \leq C q_l^{-1/m} ,
\end{equation}
where $C=C(c)$. To find a bound on $\Sigma_2$, we first rewrite the sum using the mean value theorem. We have that 
\begin{equation*}
\Sigma_2 = \sum_{k=2}^{u_l-3} \tau'(r_k) \frac{\rho_{k,l}}{q_l} ,
\end{equation*}
where $r_k \in (k/q_l, (k+\rho_{k,l})/q_l)$ if $\rho_{k,l}>0$ and $r_k \in ((k+\rho_{k,l})/q_l, k/q_l)$ if $\rho_{k,l}<0$. It follows that 
\begin{equation}
\label{eq:S2ineq}
\begin{aligned}
\left| \Sigma_2 - \sum_{k=2}^{u_l-3} \tau' \left(\frac{k}{q_l} \right) \frac{\rho_{k,l}}{q_l} \right| &= \left| \sum_{k=2}^{u_l-3} \left( \tau'(r_k) - \tau' \left(\frac{k}{q_l} \right)\right) \frac{\rho_{k,l}}{q_l} \right| \\
&\leq \frac{2}{q_l} \sum_{k=2}^{u_l-3} \max_{x,y \in \left[ \frac{k-1}{q_l}, \frac{k+2}{q_l} \right]} \left| \tau'(x) - \tau'(y) \right| \\
&\leq \frac{6}{q_l} V_I(\tau'_{q_l}) \leq Cq_l^{-1/m},
\end{aligned}
\end{equation}
where $C=C(c)$, and for the last inequality we have used Lemma \ref{lem:boundedvar}.

Finally, we need to find a bound on
\begin{equation*}
\sum_{k=2}^{u_l-3} \tau' \left(\frac{k}{q_l} \right) \frac{\rho_{k,l}}{q_l} .
\end{equation*}
Recall from the proof of Proposition \ref{prop:hatfncbrf} that we may write $\rho_{k,l}$ as
\begin{equation*}
 \rho_{k,l} = \omega_{k,l} \frac{\theta_l}{a_{l+1}}+x_l ,
\end{equation*}
where $\omega_{k,l} = \{k\alpha_l+\gamma_l\}$, and $\alpha_l$ and $\gamma_l$ are given in \eqref{eq:alphgam}. Let us define the two-dimensional sequence $\omega := (\omega_1(k), \omega_2(k))_{k=0}^{q_l-1}$, where
\begin{equation*}
 \omega_1(k) = \frac{k}{q_l} , \quad \omega_2(k) = \omega_{k,l} .
\end{equation*}
Moreover, let $G \, : \, I^2 \mapsto \br$ be the function given by
\begin{equation*}
 G(x,y) := \chi_{[2/q_l, (u_l-3)/q_l]}(x) \tau'(x) \cdot h(y),
\end{equation*}
where $h \, : \, I \mapsto \br$ is the linear function
\begin{equation*}
 h(y) := \frac{\theta_l}{a_{l+1}}y + x_l .
\end{equation*}
We then have
\begin{equation}
\label{eq:Gsum}
 \sum_{k=2}^{u_l-3} \tau' \left(\frac{k}{q_l} \right) \frac{\rho_{k,l}}{q_l} 
= \frac{1}{q_l} \sum_{k=0}^{q_l-1} G \left( \omega_1 (k), \omega_2(k) \right).
\end{equation}
From the two-dimensional Koksma-Hlawka inequality \cite[p.\ 151, p.\ 100]{kuipers} we get
\begin{equation}
\label{eq:KH2dim}
\begin{aligned}
&\left| \frac{1}{q_l} \sum_{k=0}^{q_l-1} G(w_1(k), w_2(k)) - \int_0^1 \int_0^1 G(x,y) \, dx dy \right| \\
&\leq D_{q_l}^*(\omega_1) V_I(\chi_{[2/q_l, (u_l-3)/q_l]} \tau') + D_{q_l}^*(\omega_2)V_I(h)+D_{q_l}^*(\omega)V_{I^2}(G) \\
&\leq D_{q_l}^*(\omega) \left( V_I(\chi_{[2/q_l, (u_l-3)/q_l]} \tau') + V_I(h) + V_{I^2}(G) \right) .
\end{aligned}
\end{equation}
We now use this inequality to find a bound on the sum \eqref{eq:Gsum}. It is not difficult (see e.g.\ \cite[p.\ 106]{kuipers}) to show that
\begin{equation}
\label{eq:qDq}
q_l D_{q_l}^*(\omega) \leq 2 q_l D_{q_l}^*(\omega_2) \leq 2 \left( 1+2\sum_{i=1}^l a_i \right),
\end{equation}
where for the second inequality we have used \eqref{eq:boundomega2}. Moreover, we have
\begin{equation}
\label{eq:Vh}
 V_I(h) = \frac{|\theta_l|}{a_{l+1}} \leq 1,
\end{equation}
and using monotonicity of $\tau'$ and Lemma \ref{lem:boundedvar} we get
\begin{equation}
 \label{eq:Vtau}
 V_I(\chi_{[2/q_l, (u_l-3)/q_l]} \tau') \leq V_I(\tau_{q_l}') \leq C q_l^{1-1/m}, 
\end{equation}
where $C=C(c)$ with $c$ as in \eqref{eq:growthcond}. It follows that
\begin{equation}
\label{eq:VG}
 V_{I^2}(G) \leq  V_I(\chi_{[2/q_l, (u_l-3)/q_l]} \tau') \cdot V_I(h) \leq C q_l^{1-1/m}.
\end{equation}
Lastly, we have that
\begin{equation*}
\left| \int_0^1 \int_0^1 G(x,y) \, dx dy \right| = \left| \left( \tau \left( \frac{u_l-3}{q_l} \right) - \tau \left( \frac{2}{q_l}\right)\right) \cdot \left( \frac{\theta_l}{2a_{l+1}}+x_l \right)\right|,
\end{equation*}
which by \eqref{eq:growthcond} is bounded by $Cq_l^{-1/m}$, $C=C(c,m)$, when $l>l_1$. 
Inserting \eqref{eq:qDq} -- \eqref{eq:VG} and this integral estimate in \eqref{eq:KH2dim}, we get
\begin{equation*}
 \begin{aligned}
  \left|  \frac{1}{q_l} \sum_{k=0}^{q_l-1} G \left( \omega_1 (k), \omega_2(k) \right) \right| &\leq Cq_l^{-1/m} + \frac{2}{q_l}\Big( 1+2 \sum_{i=1}^l a_i \Big)\left(1+2Cq_l^{1-1/m} \right) \\
  &\leq \sum_{i=1}^l a_i \left( C_1q_l^{-1} + C_2 q_l^{-1/m} \right),
 \end{aligned}
\end{equation*}
where the constants $C_1$ and $C_2$ depend only on $c$ and $m$ in \eqref{eq:growthcond}. It thus follows from \eqref{eq:Gsum} and \eqref{eq:S2ineq} that $|\Sigma_2|$ satisfies the bound \eqref{eq:wantedbound} for $l>l_1$. The same is true for $|\Sigma_1|$ by \eqref{eq:S1bound}, and hence $\Sigma_1 +\Sigma_2$ in \eqref{eq:b2inner} obeys the bound \eqref{eq:wantedbound} as well. 
We get 
 \begin{equation*}
  \begin{aligned}
   &\left| \sum_{l=0}^{s} \sum_{b=0}^{b_l-1} \sum_{k=0}^{q_l-1} \left( \tau \left( \frac{k}{q_l} + \frac{\rho_{k,l}}{q_l} \right)- \tau \left( \frac{k}{q_l}\right) \right) \right| \\
   &\leq C' a_1 + C_1 \sum_{l=1}^s \frac{b_l}{q_l} \sum_{i=1}^l a_i + C_2 \sum_{l=1}^s \frac{b_l}{q_l^{1/m}} \sum_{i=1}^l a_i \\
   &\leq C_1 \sum_{l=0}^s \frac{a_{l+1}}{q_l} \sum_{i=1}^{l+1} a_i + C_2 \sum_{l=0}^s \frac{a_{l+1}}{q_l^{1/m}} \sum_{i=1}^{l+1} a_i ,
  \end{aligned}
 \end{equation*}
 and from Lemma \ref{lem:cfsum} it follows that the latter expression is bounded uniformly in $s$ for almost every irrational $\alpha \in (0,1)$. This verifies \eqref{eq:bowler2}, and completes the proof of Proposition \ref{prop:domefncbrf}.
\end{proof}

\subsection{Proof of Theorems \ref{thm:mainpoly} and \ref{thm:mainconvex}}
We now turn to the proofs of Theorems \ref{thm:mainpoly} and \ref{thm:mainconvex}. We will begin by proving a lemma showing that the question of whether $S \subset I^2$ is a bounded remainder set can be restated as a question of whether an associated function is of bounded remainder.

Let $S \subset I^2$ be either a polygon or a set satisfying the conditions in Theorem \ref{thm:mainconvex}. We can then associate to $S$ a function $\tau_S : [0,1) \mapsto [0,\infty)$ defined as
\begin{equation}
\label{eq:tau}
 \tau_S(x) := \int_0^1 \chi_S(t, \{t\alpha+x\}) \, dt .
\end{equation}
A geometric interpretation of $\tau_S$ is illustrated in Figure \ref{fig:tauS}. It is easy to show that 
\begin{equation*}
 \int_0^1 \tau_S(x) \, dx = \lambda (S) .
\end{equation*}
Moreover, we have the following:
\begin{lemma}
\label{lem:equivs}
The set $S\subset I^2$ is a bounded remainder set for the irrational rotation with slope $\alpha>0$ and starting point $\vx = (x_1, x_2) \in I^2$ if and only if $\tau_S$ is a bounded remainder function with respect to $\alpha$.
\end{lemma}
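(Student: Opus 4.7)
The plan is to slice the time axis into unit intervals and thereby convert the continuous-time integral in \eqref{eq:discrepancy} into a shifted discrete Birkhoff sum of $\tau_S$ along the orbit $(k\alpha)_{k \geq 0}$, with error $O(1)$. First I would substitute $s = t + x_1$, turning the integral defining $\Delta_T$ into
\begin{equation*}
\int_{x_1}^{T+x_1} \chi_S(\{s\}, \{y+\alpha s\})\, ds, \qquad y := x_2 - \alpha x_1,
\end{equation*}
thereby absorbing the first coordinate of $\vx$ into a shift of the second. Setting $N := \lfloor T+x_1 \rfloor - \lceil x_1 \rceil$, the range $[x_1, T+x_1]$ decomposes into $N$ unit subintervals $[k, k+1)$ with $k \in \bz$, plus two boundary pieces of total length less than $2$; since $\chi_S \leq 1$, the latter contribute at most $2$ in absolute value.

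On each unit subinterval, the change of variables $u = s - k$ together with $1$-periodicity of $\{\cdot\}$ yields
\begin{equation*}
\int_k^{k+1} \chi_S(\{s\}, \{y+\alpha s\})\, ds = \int_0^1 \chi_S(u, \{\alpha u + y + k\alpha\})\, du = \tau_S(y + k\alpha),
\end{equation*}
with $\tau_S$ extended $1$-periodically to $\br$. A short application of Fubini's theorem, using that $x \mapsto \{\alpha t + x\}$ is measure-preserving on $I$ for each fixed $t$, gives $\int_0^1 \tau_S(x)\, dx = \lambda(S)$. Since moreover $|T - N| < 2$, setting $z := y + \lceil x_1 \rceil \alpha$ produces
\begin{equation*}
\Delta_T(S, \alpha, \vx) = \sum_{k=0}^{N-1} \tau_S(z + k\alpha) - N \int_0^1 \tau_S(x)\, dx + O(1),
\end{equation*}
where $z$ is independent of $T$ and the implied constant is absolute.

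Consequently, $S$ is a bounded remainder set for $\vx$ if and only if the $z$-shifted Birkhoff sums of $\tau_S$ are uniformly bounded in $N$. The equivalence with $\tau_S$ being a bounded remainder function in the sense of Definition \ref{def:brf}, that is, at shift $0$, follows from the shift-invariance of the bounded remainder property for sufficiently regular functions recorded in Remark \ref{rem:shifts}; in the cases relevant to Theorems \ref{thm:mainpoly} and \ref{thm:mainconvex}, $\tau_S$ decomposes as a finite sum of periodized hat and dome functions, to which the remark applies. I do not expect conceptual difficulties here: the orbit-slicing computation is the real substance, and the only care required is the bookkeeping of the boundary contributions and of the gap $|T - N|$, both of which are trivially absorbed into the $O(1)$ error.
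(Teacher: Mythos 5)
Your proposal is correct and follows essentially the same route as the paper: both arguments identify the time integral over a unit interval with a value of $\tau_S$ along a shifted orbit $(k\alpha)_k$, absorb the boundary pieces and the gap between $T$ and $N$ into an $O(1)$ error, and then invoke Remark \ref{rem:shifts} to pass from the shifted Birkhoff sums to the unshifted bounded remainder property. The only differences are cosmetic (the paper starts from the sum $S_N(\alpha,x_0)$ with $x_0=\{x_2-x_1\alpha\}$ and $N=\floor{T}$ and rewrites it as an integral, whereas you decompose the integral directly with a slightly different choice of $N$ and shift).
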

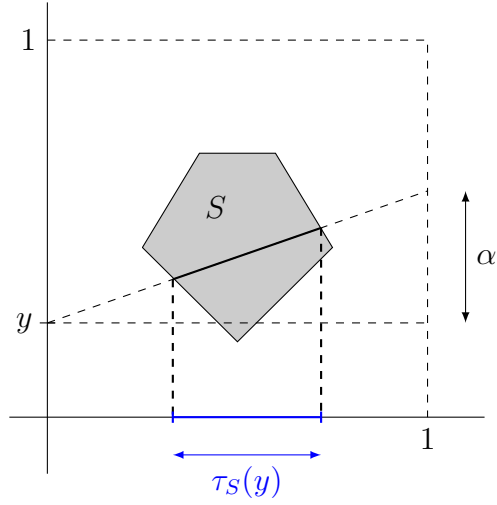
\begin{figure}
 \centering
 \begin{tikzpicture}[scale=5]
  %axes
\draw (-.1,0) -- (1.15, 0);
\draw (0,-.15) -- (0, 1.1);
%limits of unit square
\draw[dashed] (0,1) -- (1,1) -- (1,0);
\draw(0,1) node[left]{$1$};
\draw(1,0) node[below]{$1$};
%the set S
\draw[draw=black,fill={black!20!white}] (.25,.45) -- (.5,.2) -- (.75,.45) -- (.6,.7) -- (.4,.7) -- (.25,.45);
\draw(.5,.5) node[above left]{$S$};
%lines related to tau
\draw (-.02,.25) -- (.02,.25);
\draw (-.01, .25) node[left]{$y$};
\draw[dashed] (0,.25) -- (1, 0.6);
\draw[thick] (.33, {.25+.35*.33}) -- (.72, {.25+.35*.72});
\draw[dashed, thick] (.33, {.25+.35*.33}) -- (.33,0);
\draw[dashed, thick] (.72, {.25+.35*.72}) -- (.72,0);
\draw[dashed] (0,.25) -- (1,.25);
\draw[blue, thick] (.33,0) -- (.72, 0);
\draw[blue, thick] (.33,.015) -- (.33,-.015);
\draw[blue, thick] (.72, .015) -- (.72, -.015);
%arrows
\draw[latex-latex, blue] (.33,-.1) -- (.72,-.1);
\draw[blue] ({(.33+.72)/2}, -.1) node[below]{$\tau_S(y)$};
\draw[latex-latex] (1.1,.25) -- (1.1, .6);
\draw (1.1,{(.25+.6)/2}) node[right]{$\alpha$};
 \end{tikzpicture}
\caption{Geometric interpretation of the function $\tau_S$ associated to the set $S$. \label{fig:tauS}}
\end{figure}

\begin{proof}
By Remark \ref{rem:shifts}, it will be sufficient to show that $S \subset I^2$ is a bounded remainder set if and only if $\tau_S(x+x_0)$ is a bounded remainder function for some shift $x_0 \in I$. We will verify this for $x_0=\{x_2-x_1 \alpha \}$. 

Recall from Definition \ref{def:cbrs} that $S$ is a bounded remainder set if the difference
\begin{equation*}
 \Delta_T(S, \alpha, \vx) = \int_0^T \chi_S \left( \{x_1+t\}, \{x_2+t \alpha \} \right) \, dt - T \lambda (S)
\end{equation*}
is uniformly bounded in $T$. For a given $T>0$ we let $N = \floor{T}$, and denote by $S_N(\alpha, x_0)$ the difference 
\begin{equation*}
 S_N(\alpha, x_0) = \sum_{k=0}^{N-1} \tau_S \left( \{ k \alpha + x_0 \}\right) - N \lambda (S) .
\end{equation*}
By Definition \ref{def:brf}, the function $\tau_S(x+x_0)$ is of bounded remainder if $S_N(\alpha, x_0)$ is bounded uniformly in $N$. Thus, to prove Lemma \ref{lem:equivs} it is sufficient to show that 
\begin{equation}
 \label{eq:boundeddiff}
 \left| S_N(\alpha, x_0) - \Delta_T(S, \alpha, \vx) \right| \leq C ,
\end{equation}
where $C$ is a constant independent of $T$ (or equivalently, of $N$).

To verify \eqref{eq:boundeddiff}, we observe that
\begin{equation*}
 \begin{aligned}
  S_N(\alpha, x_0) &= \sum_{k=0}^{N-1} \int_0^1 \chi_S \left( t, \{(t+k)\alpha + x_0 \}\right) \, dt - N \lambda (S) \\
  &= \int_0^N \chi_S \left( \{t\}, \{t\alpha + x_0 \} \right) \, dt - N \lambda (S) \\
  &= \int_{-x_1}^{\floor{T}-x_1} \chi_S \left( \{x_1+t\}, \{x_2+t\alpha\}\right) \, dt - \floor{T} \lambda (S) .
 \end{aligned}
\end{equation*}
It is now easy to see that the difference in \eqref{eq:boundeddiff} must be bounded by 
 \begin{equation*}
  \begin{aligned}
   \left| \int_{-x_1}^0 \chi_S \left( \{x_1+t\}, \{x_2+t\alpha\}\right) \, dt \right|  &+ \\
   \left| \int_{\floor{T}-x_1}^T \chi_S \left( \{x_1+t\}, \{x_2+t\alpha\}\right) \, dt \right| &+ \{T\} \lambda (S) \leq 4 ,
  \end{aligned}
 \end{equation*}
thus verifying \eqref{eq:boundeddiff} and completing the proof of Lemma \ref{lem:equivs}.
\end{proof}

With Lemma \ref{lem:equivs} established, we are equipped to prove Theorem \ref{thm:mainpoly}.
\begin{proof}[Proof of Theorem \ref{thm:mainpoly}]
It will be sufficient to consider the special case when $S$ is a triangle. This is easy to see when $S$ is a convex polygon; $S$ can then be partitioned into finitely many triangles which are disjoint (up to boundaries), and which all have the property that no edge has slope $\alpha$. Finally, since any union of finitely many disjoint bounded remainder sets is again a bounded remainder set for the irrational rotation with slope $\alpha$, the result follows. A similar, but slightly more involved argument can be given to show that also the case when $S$ is non-convex follows from the triangle case.
We thus aim to prove that for almost all $\alpha>0$ and every $\mathbf{x} \in I^2$, every triangle $S$ with no edge of slope $\alpha$ is a bounded remainder set for the continuous irrational rotation with slope $\alpha$ and starting point $\mathbf{x}$.

Fix some $\alpha$, and let $S$ be a triangle with no edge of slope $\alpha$. Denote by $l(y)$ the intersection in the plane of $S$ and the straight line with slope $\alpha$ through the point $(0,y)$, and let $T_S \, : \, \br \mapsto [0,\infty)$ be the function 
\begin{equation*}
 T_S(y) = \frac{|l(y)|}{\sqrt{1+\alpha^2}} .
\end{equation*} 
Then $T_S$ is a (possibly shifted) hat function as defined in \eqref{eq:hatfnc} and $\tau_S$ in \eqref{eq:tau} is given by
 \begin{equation*}
  \tau_S(x) = \sum_{m \in \bz} T_S(x+m) .
 \end{equation*}

Let $\vx \in I^2$ be any given starting point for the irrational rotation. By Lemma \ref{lem:equivs}, the triangle $S$ is a bounded remainder set if and only if $\tau_S$ is a bounded remainder function with respect to $\alpha$. By Proposition \ref{prop:hatfncbrf}, this is indeed the case for every irrational $\alpha>0$ whose continued fraction expansion satisfies
\begin{equation}
\label{eq:alphcond}
 \sum_{l=0}^s \frac{a_{l+1}}{q_{l}^{1/2}} \sum_{k=1}^{l+1}a_k \leq C
\end{equation}
for some constant $C$ independent of $s$, i.e.\ a set of full measure. This completes the proof of Theorem \ref{thm:mainpoly}. 
\end{proof}

We complete this section with the proof of Theorem \ref{thm:mainconvex}. Recall that this result says that for every $\mathbf{x} \in I^2$ and almost all $\alpha>0$, every convex set $S$ whose boundary is a twice differentiable curve with positive curvature at every point is a bounded remainder set for the continuous irrational rotation with slope $\alpha$ and starting point $\mathbf{x}$.
\begin{proof}[Proof of Theorem \ref{thm:mainconvex}]
 We have seen in Lemma \ref{lem:equivs} that the set $S$ is of bounded remainder for the irrational rotation with slope $\alpha$ and starting point $\vx \in I^2$ if and only if the associated function $\tau_S$ in \eqref{eq:tau} is of bounded remainder with respect to $\alpha$. Suppose that $\tau_S$ is of the form 
 \begin{equation}
 \label{eq:csformoftau}
  \tau_S(x) = \sum_{m \in \bz} T_S(x+m),
 \end{equation}
where $T_S$ is the shift of a dome function as given in Definition \ref{def:domefnc}. Then this would be an immediate consequence of Proposition \ref{prop:domefncbrf} and Remark \ref{rem:shifts} for every $\mathbf{x} \in I^2$ and every irrational $\alpha >0$ satisfying \eqref{eq:alphcond}. Our proof is thus complete if we can show that $\tau_S$ is of the form \eqref{eq:csformoftau} for some shifted dome function $T_S$.

As in the proof of Theorem \ref{thm:mainpoly}, we let $l(y)$ be the intersection in the plane of the set $S$ and the straight line with slope $\alpha$ through the point $(0,y)$, and we let $T_S \, : \, \br \mapsto [0, \infty)$ be the function
\begin{equation*}
 T_S(y) = \frac{|l(y)|}{\sqrt{1+\alpha^2}} .
\end{equation*}
Then $\tau_S$ is given in \eqref{eq:csformoftau}. It is clear that $T_S$ is a continuous function supported on some interval $[B_1, B_2]$, and that an appropriate shift of $T_S$ would satisfy condition \eqref{it:domediff} in Definition \ref{def:domefnc}. We will show that also condition \eqref{it:domegrowth} is satisfied for this shift of $T_S$; that is, we can find $c>0$, $m>0$ and $\varepsilon>0$ such that
\begin{equation*}
 T_S(B_1+x) \leq cx^{1/m}, 
\end{equation*}
and
\begin{equation*}
 T_S(B_2-x) \leq cx^{1/m},
\end{equation*}
whenever $0 \leq x < \varepsilon$. We verify only the latter inequality (the argument for the former is equivalent).

Let $C=(C_1(s), C_2(s))$ denote the boundary of $S$ parametrized by arc length, and denote by $L$ its total length. We then have $|C'(s)|=1$ and $C'(s) \perp C''(s)$ for all $s \in [0,L]$. The curvature $\kappa (s)$ at the point $C(s)$ is given by $\kappa (s) = |C''(s)|$, and assumed positive for all $s \in [0,L]$. We let
\begin{equation}
\label{eq:defkappa}
k := \min_{s \in [0,L]} \kappa (s) .
\end{equation}

The line with slope $\alpha$ through the point $(0,B_2)$ in the plane will intersect the curve $C$ at a single point $p$. We let this line be the $x$-axis in a new coordinate system $(x,y)$ where $p$ is the origin (see Figure \ref{fig:defH}), and view $C$ as a curve in these coordinates with $C(0) = (C_1(0), C_2(0)) = (0,0)$. 
\begin{figure}
 \centering
 \begin{tikzpicture}[scale=1]
 %lines/axes and ellipses
 \draw (-.5,0) -- (5,0);
 \draw (0,-.5) -- (0,5);
 \draw[blue] (2, 2.5) ellipse (1cm and 1.5cm);
 \draw[gray,semithick,latex-] (-.5, {(10+3*sqrt(3))/4-sqrt(3)}) -- (3.5, {(10+3*sqrt(3))/4+sqrt(3)});
 \draw[gray,semithick,-latex] (.5, {(10+3*sqrt(3))/4+2/sqrt(3)}) -- (3.5, {(10+3*sqrt(3))/4-4/sqrt(3)});
 %points and notation
 \fill (1.5, {(10+3*sqrt(3))/4}) circle (.05);
 \draw (1.5, {(10+3*sqrt(3))/4+.07}) node[above]{$p$};
 \fill (0, {10/4}) circle (.05);
 \draw (0, {10/4+.1}) node[left]{$B_2$};
 \draw[gray] (-.5, {(10+3*sqrt(3))/4-sqrt(3)}) node[below left]{$x$};
 \draw[gray] (3.5, {(10+3*sqrt(3))/4-4/sqrt(3)}) node[below right]{$y$};
 \draw[blue] (2.9,3.9) node{$C$};
 \end{tikzpicture}
\caption{The curve $C$ and the new coordinate axes $x$ and $y$. \label{fig:defH}}
\end{figure}
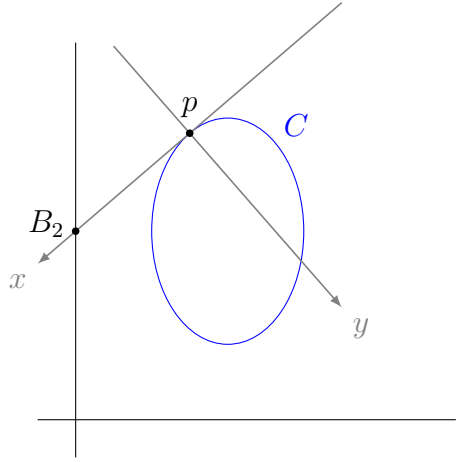
We may then think of a section of $C$ around $p$ as the graph of the function $H \, : \, (-\delta, \delta) \mapsto [0,\infty)$ given by
\begin{equation*}
 H(x) = C_2\left( C_1^{-1}(x)\right) .
\end{equation*}
We have $C_2'(0)=0$ and $C_1'(0)=1$, and since $C_1$ and $C_2$ are both twice continuously differentiable it follows that 
\begin{equation*}
 H'(x) = \frac{C_2'(C_1^{-1}(x))}{C_1'(C_1^{-1}(x))} 
\end{equation*}
and
\begin{equation*}
 H''(x) = \frac{C_2''(s)C_1'(s)-C_2'(s)C_1''(s)}{\left( C_1'(s) \right)^3}, \quad s=C_1^{-1}(x),
\end{equation*}
are both well-defined and continuous on some interval $(-\delta, \delta)$. By choosing $\delta$ sufficiently small we can ensure that 
\begin{equation*}
 \left| C_1'(C_1^{-1}(x)) \right| \geq \frac{1}{2}, \quad x \in (-\delta, \delta),
\end{equation*}
which (for $s=C_1^{-1}(x)$ and recalling that $C'(s) \perp C''(s)$) in turn implies 
\begin{equation}
 \label{eq:boundHprime2}
 |H''(x)| = \frac{|C''(s)| \cdot |C'(s)|}{|C_1'(s)|^3} \geq \frac{k}{8} , \quad x \in (-\delta, \delta),
\end{equation}
with $k$ given in \eqref{eq:defkappa}. 

We now use this lower bound on $|H''(x)|$ to find an upper bound on $T_S(B_2-z)$ for sufficiently small $z>0$. We have 
\begin{equation}
\label{eq:TSl}
T_S(B_2-z)=\frac{|l|}{\sqrt{1+\alpha^2}} ,
\end{equation}
where $l$ is the intersection of $S$ with the line of slope $\alpha$ through the point $(0,B_2-z)$, illustrated in Figure \ref{fig:defell}. 
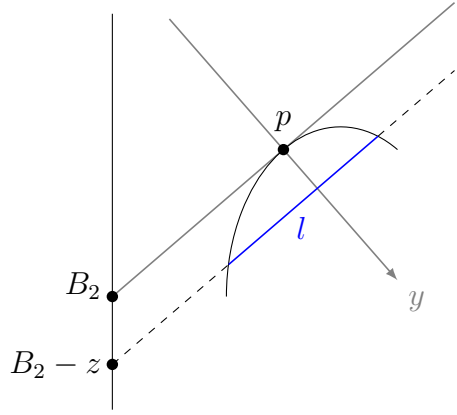
\begin{figure}
 \centering
 \begin{tikzpicture}[scale=1.5]
%Arcs and lines
 \draw (0,1.5) -- (0,5);
  \draw[gray,semithick] (0, {10/4}) -- (3, {(10+3*sqrt(3))/4+3/4*sqrt(3)});
   \draw[gray,semithick,-latex] (.5, {(10+3*sqrt(3))/4+2/sqrt(3)}) -- (2.5, {(10+3*sqrt(3))/4-2/sqrt(3)});
  \draw (2.5, {(10+3*sqrt(3))/4}) arc (60:180:1cm and 1.5cm);
  \draw[dashed] (0, {10/4-.6}) -- (1.02, {10/4-.6+1.02*sqrt(3)/2});
  \draw[blue, semithick] (1.02, {10/4-.6+1.02*sqrt(3)/2}) -- (2.33, {10/4-.6+2.33*sqrt(3)/2});
  \draw[dashed] (2.33, {10/4-.6+2.33*sqrt(3)/2}) -- (3, {(10+3*sqrt(3))/4-.6+3/4*sqrt(3)});
%points and notation
 \fill (1.5, {(10+3*sqrt(3))/4}) circle (.05);
 \draw (1.5, {(10+3*sqrt(3))/4+.07}) node[above]{$p$};
 \fill (0, {10/4}) circle (.05);
 \draw (0, {10/4+.1}) node[left]{$B_2$};
  \fill (0, {10/4-.6}) circle (.05);
 \draw (0, {10/4-.6}) node[left]{$B_2-z$};
 \draw[blue] (1.8, {(10+3*sqrt(3))/4-.3*2/sqrt(3)-.15}) node[below left]{$l$};
 \draw[gray] (2.5, {(10+3*sqrt(3))/4-2/sqrt(3)}) node[below right]{$y$};
 \end{tikzpicture}
 \caption{The intersection $l$ of $S$ and the line of slope $\alpha$ through the point $(0,B_2-z)$. \label{fig:defell}}
\end{figure}
The line segment $l$ is at height $y=z/\sqrt{1+\alpha^2}$ above the point $p$ (see Figure \ref{fig:defell}). If $y < \min \{ H(\delta), H(-\delta) \}$, then we may denote by $x_1, x_2$ the two values of $x \in (-\delta, \delta)$ satisfying $H(x)=y$, and
\begin{equation}
 \label{eq:absl}
 |l| \leq 2 \max \left\{ |x_1|, |x_2| \right\} .
\end{equation}
By Taylor's theorem we have
\begin{equation*}
y = H(x_i) =H(0) + \frac{H'(0)}{1!}x_i + \frac{H''(r_i)}{2!}x_i^2 = \frac{H''(r_i)}{2!}x_i^2 , 
\end{equation*}
for $i=1,2$ and some $r_i \in (-\delta, \delta)$, and from \eqref{eq:boundHprime2} it thus follows that 
\begin{equation*}
 |x_i| = \left( \frac{2y}{H''(r_i)} \right)^{1/2} \leq \frac{4}{\sqrt{k}} \cdot y^{1/2}, \quad  i=1,2 .
\end{equation*}
Hence, from \eqref{eq:absl} we get 
\begin{equation}
\label{eq:boundonell}
|l| \leq \frac{8}{\sqrt{k}} \cdot y^{1/2} \leq \frac{8}{\sqrt{k}} \cdot z^{1/2},
\end{equation}
and by \eqref{eq:TSl} and \eqref{eq:boundonell} we have
\begin{equation*}
 T_S(B_2-z) \leq \frac{8}{\sqrt{k(1+\alpha^2)}} \cdot z^{1/2} .
\end{equation*}
This verifies that a shift of the function $T_S$ satisfies the growth condition \eqref{it:domegrowth} in Definition \ref{def:domefnc} with $c=8/\sqrt{k(1+\alpha^2)}$, $m=2$ and some $\varepsilon >0$ (for instance, $\varepsilon = \min \{ H(\delta), H(-\delta) \}$ will suffice). The function $\tau_S$ is thus of the form \eqref{eq:csformoftau}, where $T_S$ is the shift of a dome function, and this completes the proof of Theorem \ref{thm:mainconvex}.
 \end{proof}

%%%%%%%%%%%%%%%%%%%%%%%%%%%%%%%%% Negative results %%%%%%%%%%%%%%%%%%%%%%%%%%%%%%%%%%%%%%%

\section{Proof of Theorem \ref{thm:mainneg}}
\label{sec:neg}
In this section we present the proof of Theorem \ref{thm:mainneg}. 
For the proof of part \eqref{it:negpoly} we simply give an outline, as this proof largely follows the proof given above for Proposition \ref{prop:hatfncbrf}.
Part \eqref{it:negconvex}, on the other hand, is proven in full detail. Lastly, we present the proof of part \eqref{it:specset}.

\begin{proof}[Proof of Theorem \ref{thm:mainneg} \ref{it:negpoly}]
Fix an irrational $\alpha>0$ with continued fraction expansion $\alpha = [0; a_1, a_2, a_3, \cdots]$ satisfying $a_1=1$ and $a_{l+1}\geq q_l^7$. One can show that there are uncountably many such irrationals. 

Let $S$ be the triangle with vertices $(0,0)$, $(0,1)$ and $(K,1)$ for some $0<K<1$ to be determined. We will assume that $1-K\alpha >0$. Denote by $\tau_S$ the function in \eqref{eq:tau} associated to $S$; this is a hat function as defined in \eqref{eq:hatfnc}, with $a=1-K\alpha$ and $b=1$. By Lemma \ref{lem:equivs}, the triangle $S$ is a bounded remainder set for the continuous irrational rotation with slope $\alpha$ and some arbitrary starting point $\mathbf{x} \in I^2$ if and only if $\tau_S$ is a bounded remainder function with respect to $\alpha$. In what follows we show that the latter is \emph{not} the case, and accordingly $S$ is not a bounded remainder set.

For $N=\sum_{l=0}^s b_l q_l$, one can show by calculations analogous to those in the proof of Proposition \ref{prop:hatfncbrf} that
\begin{equation}
 \label{eq:negpoly1}
\left| \sum_{k=0}^{N-1} \tau_S(\{k \alpha\})- \frac{NK}{2} \right| = C \sum_{l=0}^s \xi_l (1-\xi_l) \frac{b_l}{q_l} + O(1) , 
\end{equation}
where $C$ depends only on $K$ and $\alpha$, and $\xi_l =\{q_l a\} = \{q_l(1-K\alpha)\}$. For $x \in \br$, let $\norm{x}$ denote the minimal distance from $x$ to an integer, and note that
\begin{equation*}
 \xi_l (1-\xi_l) \geq \frac{1}{2} \norm{q_la}.  
\end{equation*}
It is a well-known fact (see e.g.\ \cite[p.\ 69]{khinchin}) that for almost all $a \in (0,1)$ one can find a positive constant $c$ such that 
\begin{equation*}
 \norm{n\cdot a} \geq \frac{c}{n^2}
\end{equation*}
for all $n\geq 2$.
Thus, one can indeed find $K\in (0,1)$ such that $a= 1-K\alpha >0$, and moreover
\begin{equation*}
 C \sum_{l=0}^s \xi_l (1-\xi_l) \frac{b_l}{q_l} \geq C \sum_{l=0}^s \norm{q_la} \frac{b_l}{q_l} > C \sum_{l=0}^s \frac{b_l}{q_l^3}.
\end{equation*}

Now let $b_l:=q_l^4$. Then the sum on the right hand side in \eqref{eq:negpoly1} is bounded from below by $C \sum_{l=0}^s q_l$, which tends to infinity as $s \rightarrow \infty$. For the sequence of integers $N_s = \sum_{l=0}^s q_l^5$, we thus have 
\begin{equation*}
\left| \sum_{k=0}^{N_s-1} \tau_S(\{k \alpha\})- N_s \lambda(S) \right| \rightarrow \infty  
\end{equation*}
as $s \rightarrow \infty$. This shows that $\tau_S$ is not a bounded remainder function with respect to $\alpha$, and completes the proof of Theorem \ref{thm:mainneg} \ref{it:negpoly}.
\end{proof}

\begin{proof}[Proof of Theorem \ref{thm:mainneg} \ref{it:negconvex}]
Fix an irrational $\alpha \in (1/4, 1/2)$ with continued fraction expansion $\alpha=[0;a_1, a_2, \ldots ]$ satisfying $a_{l+1}>q_l^{100}$ and $p_l$ even for an infinite number of odd indices $l$, say for the sequence $l_1 < l_2 < l_3 \ldots$. One can show that there are uncountably many such irrationals.

Let $S$ be the disc with diameter $d:=\alpha / \sqrt{1+\alpha^2}$ illustrated in Figure \ref{fig:negdisc}. By Lemma \ref{lem:equivs}, the set $S$ is of bounded remainder for the continuous irrational rotation with slope $\alpha$ and arbitrary starting point $\mathbf{x} \in I^2$ if and only if the associated function $\tau_S$ in \eqref{eq:tau} is a bounded remainder function with respect to $\alpha$. In what follows, we will show that there exists an $x \in I$ and a sequence of integers $N_1 < N_2 < N_3 \ldots$ such that 
\begin{equation*}
\left| \sum_{k=0}^{N_i-1} \tau_S \left( \{ k\alpha +x \} \right) - N_i \lambda (S) \right| \rightarrow \infty
\end{equation*}
as $i \rightarrow \infty$. By Remark \ref{rem:shifts}, this proves $\tau_S$ is not a bounded remainder function, and accordingly $S$ is not a bounded remainder set.
\begin{figure}[htb]
 \centering
 \begin{tikzpicture}[scale=5]
%axes
\draw (-.15,0) -- (1.15, 0);
\draw (0,-.1) -- (0, 1.1);
%limits of unit square
\draw[dashed] (0,1) -- (1,1) -- (1,0);
\draw(0,1) node[left]{$1$};
\draw(1,0) node[below]{$1$};
%the set S
\draw[draw=black, fill={black!20!white}] ({2/3}, {1/(3*sqrt(2))+1/(4*sqrt(2))}) circle[radius={sqrt(3)/(4*sqrt(7)}];
\draw ({2/3}, {1/(3*sqrt(2))+1/(4*sqrt(2))}) node{$S$};
%band lines
\draw (0,0) -- (1,{1/(2*sqrt(2))});
\draw (0,{1/(2*sqrt(2))}) -- (1, {2/(2*sqrt(2))}); 
\draw[dashed] (0,{1/(2*sqrt(2))}) -- (1,{1/(2*sqrt(2))});
%arrows
\draw[latex-latex] (-.1, 0) -- (-.1, {1/(2*sqrt(2))});
\draw(-.1, {1/(4*sqrt(2))}) node[left]{$\alpha$};
\draw[latex-latex] (1.1,0) -- (1.1, {1/(2*sqrt(2))});
\draw(1.1,{1/(4*sqrt(2))}) node[right]{$\alpha$};
 \end{tikzpicture}
 \caption{The disc $S$ with diameter $d=\alpha / \sqrt{1+\alpha^2}$. \label{fig:negdisc}}
\end{figure}
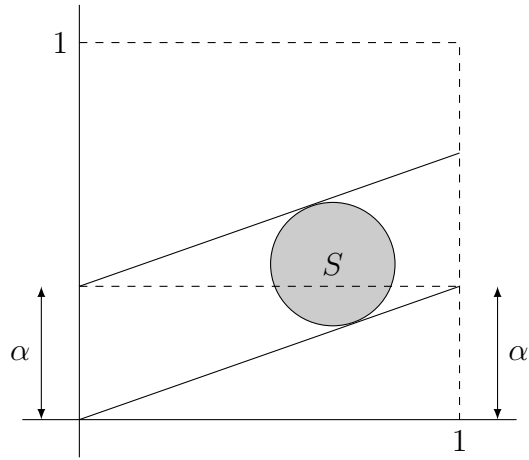

The function $\tau_S$ associated to $S$ is given explicitly by
\begin{equation*}
 \tau_S (y) = \begin{cases}
               \frac{\alpha}{1+\alpha^2} \sqrt{1-(1-2y/\alpha)^2} \text{ ,} &0\leq y\leq \alpha \text{ ;}\\
               0 \text{ ,} &\alpha<y\leq 1 \text{ ,}
              \end{cases}
\end{equation*}
and we note that 
\begin{equation}
\label{eq:negdiscmes}
 \lambda(S) = \int_0^1 \tau_S (y) \, dy = \frac{\pi}{4} \cdot \frac{\alpha^2}{1+\alpha^2} .
\end{equation}
We introduce the notation $S_N(x)$ for the sum
\begin{equation*}
 S_N(x) := \sum_{k=0}^{N-1} \tau_S \left( \{ k\alpha +x\}\right) .
\end{equation*}
Let us now fix some $i$ (and thereby an odd index $l_i$), put 
\begin{equation*}
 p:=p_{l_i}=2m \quad (m \in \bn), \quad q:= q_{l_i},
\end{equation*}
and evaluate the sum $S_N(x)$ for $N:=q^{11}$ and some $x \in [0,1/q]$. We then have 
\begin{equation}
 \label{eq:SNdouble}
 S_N(x) = \sum_{j=0}^{q^{10}-1} \sum_{k=0}^{q-1} \tau_S \left( \{ (jq+k)\alpha + x \}\right) .
\end{equation}
Recall from \eqref{eq:cferror} that 
\begin{equation*}
 \left| \alpha - \frac{p}{q} \right| \leq \frac{1}{q^2 a_{l_i+1}} \leq \frac{1}{q^{102}}.
\end{equation*}
Using this fact, we get 
\begin{equation*}
 \norm{jq\alpha} < j \norm{q\alpha} \leq \frac{j}{q^{101}} < \frac{1}{q^{91}},
\end{equation*}
where $\norm{x}$ denotes the minimal distance from $x \in \br$ to an integer. It follows that 
\begin{equation*}
\begin{aligned}
 \norm{\left\{ (jq+k)\alpha + x \right\} - \left\{ k \cdot \frac{p}{q} + x\right\}} &\leq  \norm{jq\alpha}+k\norm{\alpha-\frac{p}{q}} \\
 &< \frac{1}{q^{91}}+\frac{q}{q^{102}} < \frac{1}{q^{90}} ,
\end{aligned}
\end{equation*}
and hence
\begin{equation*}
 \left| \tau_S \left( \{(jq+k)\alpha+x\} \right) - \tau_S \left( \left\{ k\cdot \frac{p}{q}+x \right\} \right) \right| \leq \left| \tau_S \left( \frac{1}{q^{90}} \right) \right| < \frac{1}{q^{44}}.
\end{equation*}
Combining this bound with \eqref{eq:SNdouble}, we get
\begin{equation}
\label{eq:SNbound1}
\left| S_N (x) - q^{10}\sum_{k=0}^{q-1} \tau_S \left( \left\{ k \cdot \frac{p}{q}+x\right\} \right)\right| < q^{11} \cdot \frac{1}{q^{44}} = \frac{1}{q^{33}} .
\end{equation}
In light of \eqref{eq:SNbound1}, we introduce the function 
\begin{equation*}
 \sigma (y) := \begin{cases}
               \frac{\alpha}{1+\alpha^2} \sqrt{1-(1-2qy/p)^2} \text{ ,} &0\leq y\leq p/q \text{ ;}\\
               0 \text{ ,} &p/q<y\leq 1 \text{ .}
              \end{cases}
\end{equation*}
Since the index $l_i$ is odd, we have $\alpha < p/q$ and $\sigma (y)=\tau_S (\alpha qy/p)$ for all $y \in [0,1)$. From 
\begin{equation*}
 \left| \frac{\alpha q}{p} -1 \right| = \frac{q}{p} \left| \alpha - \frac{p}{q} \right| < \frac{1}{\alpha} \cdot \frac{1}{q^{102}} < \frac{1}{q^{101}}
\end{equation*}
it thus follows that
\begin{equation*}
 \left| \sigma (y) - \tau_S(y) \right|= \left| \tau_S \left( \frac{\alpha p}{q} y \right) - \tau_S(y) \right| < \left| \tau_S \left( \frac{1}{q^{101}} \right) \right| < \frac{1}{q^{50}} .
\end{equation*}
Combining this bound with \eqref{eq:SNbound1}, we get
\begin{equation}
 \label{eq:SNbound2}
 \left| S_N (x) - q^{10}\sum_{k=0}^{q-1} \sigma \left( \left\{ k \cdot \frac{p}{q}+x\right\} \right)\right| <  \frac{1}{q^{33}}+ \frac{q^{11}}{q^{50}} < \frac{1}{q^{32}} .
\end{equation}
Note that some of the above estimates hold only for $q$ greater than some lower threshold $q>q_0$.

Let us now have a closer look at the sum over $\sigma$ in \eqref{eq:SNbound2}. We have
\begin{equation}
\label{eq:sigmabound}
\begin{aligned}
 \sum_{k=0}^{q-1} \sigma \left( \left\{ k \cdot \frac{p}{q}+x\right\} \right) &= \sum_{k=0}^{p-1} \sigma \left( \frac{k}{q} +x\right) \\
 &=\frac{\alpha}{1+\alpha^2} \sum_{k=0}^{p-1} \sqrt{1- \left( 1-\frac{2k}{p}-\frac{2q}{p} x \right)^2} \\
 &=\frac{\alpha}{1+\alpha^2} \sum_{k=0}^{2m-1} \sqrt{1- \left( 1-\frac{k}{m}-\frac{q}{m} x \right)^2} \\
 &=\frac{\alpha}{1+\alpha^2} \cdot 2mG_m\left( \frac{q}{m} x \right),
 \end{aligned}
\end{equation}
where
\begin{equation*}
 G_m(x) := \frac{1}{2m} \sum_{k=0}^{2m-1} \sqrt{1-\left( 1-\frac{k}{m} - x\right)^2} \text{ ,} \quad x \in \left[0, \frac{1}{m}\right) .
\end{equation*}
\begin{figure}
 \centering
\begin{tikzpicture}[scale=3]
%axes
\draw[-latex] (-.15,0) -- (2.3, 0);
\draw (2.3,0) node[below]{$x$};
\draw[-latex] (0,-.1) -- (0, 1.4);
\draw (0,1.2) node[left]{$G_m(x)$};
\draw (1,-.03) -- (1, .03);
\draw(1,-.03) node[below]{$1/2m$};
\draw (2,-.03) -- (2, .03);
\draw(2,-.03) node[below]{$1/m$};
%arc
\draw (2,0.3) arc[radius=1, start angle=0, end angle=180]; 
\end{tikzpicture}
\caption{The function $G_m(x)$. \label{fig:Gm}}
\end{figure}
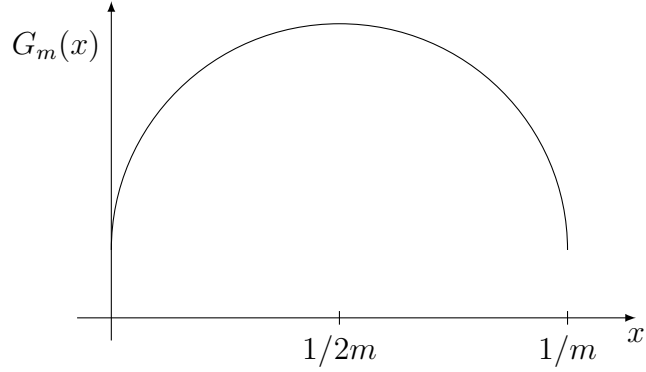
The function $G_m$ is illustrated in Figure \ref{fig:Gm}. It is clear that $G_m(x) = G_m(1/m-x)$, and by elementary analysis one can show that $G_m$ increases on $[0,1/(2m))$ in such a way that 
\begin{equation*}
 G_m \left( \frac{1}{3m} \right) > G_m \left( \frac{1}{6m} \right) + \frac{2c}{m^{3/2}}
\end{equation*}
for some $c>0$. From this inequality one can deduce that there exists a subinterval $\Lambda \subset [0,1/(2m)]$ of length at least $1/(6m)$ such that either
\begin{equation}
 \label{eq:Gcond1}
 G_m(x) > \frac{1}{2} \int_0^2 \sqrt{1-(1-y)^2} \, dy + \frac{c}{m^{3/2}} = \frac{\pi}{4} + \frac{c}{m^{3/2}}
\end{equation}
or
\begin{equation}
 \label{eq:Gcond2}
 G_m(x) < \frac{1}{2} \int_0^2 \sqrt{1-(1-y)^2} \, dy - \frac{c}{m^{3/2}} = \frac{\pi}{4} - \frac{c}{m^{3/2}}
\end{equation}
for all $x \in \Lambda$. We assume in what follows that \eqref{eq:Gcond1} holds for all $x \in \Lambda$ (the case when \eqref{eq:Gcond2} holds is treated similarly). Then for $\tilde{x} \in \tilde{\Lambda}$, where
\begin{equation*}
\tilde{\Lambda}:= (m/q)\Lambda \subset [0,1/(2q)) , 
\end{equation*}
we have $q\tilde{x}/m \in \Lambda$, and from \eqref{eq:sigmabound} and \eqref{eq:Gcond1} it follows that 
\begin{equation}
\label{eq:sigmabound2}
 \sum_{k=0}^{q-1} \sigma \left( \left\{ k \cdot \frac{p}{q}+\tilde{x} \right\} \right) > \frac{\alpha}{1+\alpha^2} \cdot 2m \left( \frac{\pi}{4}+\frac{c}{m^{3/2}} \right).
\end{equation}
In the following we let $c_1, c_2, \ldots$ denote positive absolute constants. From \eqref{eq:sigmabound2} and \eqref{eq:SNbound2} we get
\begin{equation*}
\begin{aligned}
 S_N(\tilde{x}) &> q^{10} \sum_{k=0}^{q-1} \sigma \left( \left\{ k \cdot \frac{p}{q}+\tilde{x} \right\} \right) - \frac{1}{q^{32}} \\
 &> q^{10} \cdot 2m \cdot \frac{\alpha}{1+\alpha^2} \left( \frac{\pi}{4}+\frac{c}{m^{3/2}}\right) - \frac{1}{q^{32}} \\
 &> N \cdot \frac{p}{q}\cdot \frac{\pi\alpha}{4(1+\alpha^2)} + c_1q^9 \\
 &> N \cdot \frac{\pi \alpha^2}{4(1+\alpha^2)} + c_1q^9 = N \lambda(S) + c_1 q^9,
\end{aligned}
\end{equation*}
where we recall from \eqref{eq:negdiscmes} that $\lambda (S)$ is the integral over $\tau_S$ and the measure of the disc $S$ in Figure \ref{fig:negdisc}. Thus, we have shown that 
\begin{equation}
\label{eq:SNtilde}
 S_N(\tilde{x}) - N \lambda (S) > c_1q^9 \text{ ,} \quad \tilde{x} \in \tilde{\Lambda} .
\end{equation}

Finally, we define the set $\bar{\Lambda} \subset I$ by
\begin{equation*}
 \tilde{\Lambda}^{(j)} := \tilde{\Lambda} + \frac{j}{q} \text{ ,} \quad \bar{\Lambda}:=  \bigcup_{j=0}^{q-1} \tilde{\Lambda}^{(j)} .
\end{equation*}
Since $\lambda(\tilde{\Lambda}) \geq 1/(6q)$, we have $\lambda (\bar{\Lambda}) \geq 1/6$. Choose some $x \in \bar{\Lambda}$, and find $j \in \{0,1, \ldots, q-1\}$ such that 
\begin{equation*}
 x=\tilde{x}+\frac{j}{q} \text{ ,} \quad \tilde{x} \in \tilde{\Lambda} .
\end{equation*}
Furthermore, choose $k_j \in \{0,1, \ldots , q-1\}$ such that $k_jp \equiv q-j (\modu q)$, and note that
\begin{equation*}
\norm{k_j\alpha+\frac{j}{q}} = \norm{k_j\alpha - \frac{k_jp}{q}} \leq k_j \norm{\alpha - \frac{p}{q}} < \frac{1}{q^{101}} . 
\end{equation*}
From this and the fact that $|\tau_S|\leq 1$, it follows that
\begin{equation*}
 \begin{aligned}
  S_N(x) &> \sum_{k=0}^{k_j} \tau_S \left( \{ k\alpha +x\} \right) + \sum_{k=0}^{N-1} \tau_S \left(\{k\alpha + k_j \alpha +x\} \right) -q \\
  &> \sum_{k=0}^{N-1} \tau_S \left( \left\{ k\alpha + x -\frac{j}{q}\right\} \right) - c_2q \\
  &= \sum_{k=0}^{N-1} \tau_S \left( \{k\alpha + \tilde{x} \}\right) - c_2q = S_N(\tilde{x})-c_2q ,
 \end{aligned}
\end{equation*}
and from \eqref{eq:SNtilde} we thus get
\begin{equation}
 \label{eq:SNfinal}
 S_N(x) - N \lambda (S) > c_3 q^9
\end{equation}
for all $x \in \bar{\Lambda}$. 

The above analysis can be carried out for each $l_i$ (given that $q_{l_i}$ is above the threshold $q_{l_i}>q_0$). That is, for each $i$, we find $\bar{\Lambda}_i \subset I$ of measure $\lambda(\bar{\Lambda}_i)\geq 1/6$ such that \eqref{eq:SNfinal} holds for all $x \in \bar{\Lambda}_i$ with $q=q_{l_i}$ and $N=q^{11}$. Now fix $x \in I$ such that $x \in \bar{\Lambda}_i$ for infinitely many $i$, and for each such $i$ let $q_i = q_{l_i}$ and $N_i = q_i^{11}$. Then for these $N_i$, we have
\begin{equation*}
 \left| S_{N_i}(x)-N_i \lambda (S) \right| = \left| \sum_{k=0}^{N_i-1} \tau_S \left( \{k \alpha+x\}\right)-N_i \lambda (S)\right| \rightarrow \infty 
\end{equation*}
as $i \rightarrow \infty$. This verifies that $\tau_S$ is not a bounded remainder function with respect to $\alpha$, and completes the proof of Theorem \ref{thm:mainneg} \ref{it:negconvex}.
\end{proof}

\begin{proof}[Proof of Theorem \ref{thm:mainneg} \ref{it:specset}]
 Let $S$ be the triangle with vertices $(0,0)$, $(0,1)$ and $(1,0)$. Fix some slope $\alpha>0$ and starting point $\mathbf{x} \in I^2$. For simplicity we assume that $\alpha<1$ (the proof is similar when $\alpha\geq 1$). By Lemma \ref{lem:equivs}, the set $S$ is of bounded remainder for the continuous irrational rotation with slope $\alpha$ and starting point $\mathbf{x}$ if and only if the associated function $\tau_S$ in \eqref{eq:tau} is of bounded remainder with respect to $\alpha$. For the specific triangle $S$, we have
 \begin{equation}
  \label{eq:specsettau}
  \tau_S(x) = \begin{cases}
               \frac{1-x}{1+\alpha} \text{ ,} & 0 \leq x \leq 1-\alpha \text{ ;}\\
               \frac{1-x}{1+\alpha} + \frac{2-x}{1+\alpha}-\frac{1-x}{\alpha} \text{ ,} & 1-\alpha <x\leq 1 \text{ .}
              \end{cases}
 \end{equation}
 
 It is a well-known fact that a $1$-periodic function $f$ which is integrable over the unit interval $I$ is a bounded remainder function with respect to $\alpha$ if and only if there exists a bounded and measurable $1$-periodic function $g$ satisfying the equation 
 \begin{equation*}
  f(x) - \int_0^1 f(t) \, dt = g(x) - g(x+\alpha) 
 \end{equation*}
 for almost every $x$. This is known as the \emph{cohomological equation} for $f$. By a classical result of Gottschalk and Hedlund \cite[Theorem 14.11]{gottschalk}, the function $g$ can be chosen to be continuous whenever $f$ is continuous. Thus, our proof is complete if we can find a continuous $1$-periodic function $g$ such that
 \begin{equation}
  \label{eq:specsetg}
  \tau_S(x) - \int_0^1 \tau_S(t) \, dt = g(x) - g(x+\alpha),
 \end{equation}
where $\tau_S$ is given in \eqref{eq:specsettau}.

Let $g$ be the continuous $1$-periodic function defined on $I$ by
\begin{equation*}
 g(x) = \frac{x(x-1)}{2\alpha(1+\alpha)} .
\end{equation*}
It is straightforward to check that this function satisfies \eqref{eq:specsetg}. This confirms that $\tau_S$ is a bounded remainder function with respect to $\alpha$, and completes the proof of Theorem \ref{thm:mainneg} \ref{it:specset}.
\end{proof}

%%%%%%%%%%%%%%%%%%%%%%%%%%%%%%%%%%%%%% Bibliography %%%%%%%%%%%%%%%%%%%%%%%%%%%%%%%%%%%%%

\end{document}